\theoremstyle{plain}   
\newtheorem*{Proposition S}{Proposition S}
\newtheorem{theorem}{Theorem}[section]   
\newtheorem{lemma}[theorem]{Lemma}         
\newtheorem{proposition}[theorem]{Proposition}  
\theoremstyle{definition}
\newtheorem{definition}[theorem]{Definition}   
\theoremstyle{remark}
\newtheorem{remark}[theorem]{Remark}        
\numberwithin{equation}{section}
\newcommand{\ep}{\varepsilon}
\newcommand{\R}{{\mathbb R}}
\newcommand{\N}{{\mathbb N}}
\newcommand{\D}{\cal D}
\newcommand{\cal}{\mathcal}
\newcommand{\lin}{\operatorname{span}}
\newcommand{\vf}{\varphi}
\newcommand{\interior}{\operatorname{int}}
\newcommand{\supp}{\operatorname{supp}}
\newcommand{\graph}{\operatorname{graph}}
\newcommand{\diam}{\operatorname{diam}}
\newcommand{\osc}{\operatorname{osc}}
\begin{document}

\title{On sets of discontinuities  of functions continuous on all lines}

\thanks{}

\author{Lud\v ek Zaj\'\i\v{c}ek}

\subjclass[2010]{Primary: 26B05; Secondary: 46B99.}

\keywords{linear continuity,  discontinuity sets, Banach space}

\email{zajicek@karlin.mff.cuni.cz}

\address{Charles University,
Faculty of Mathematics and Physics,
Sokolovsk\'a 83,
186 75 Praha 8-Karl\'\i n,
Czech Republic}


\begin{abstract} 
Answering a question asked by K.C. Ciesielski and T. Glatzer in 2013, we construct a $C^1$-smooth
 function $f$ on $[0,1]$ and a set $M \subset \graph f$ nowhere dense in $\graph f$ such that there 
 does not exist any linearly continuous
  function  on $\R^2$ (i.e. function continuous on all lines) which is discontinuous at each point of
	 $M$. We substantially use a recent full characterization of sets of discontinuity points of
	 linearly continuous functions on $\R^n$ proved by T. Banakh and O. Maslyuchenko in 2020.
	As an easy consequence of our result, we prove that the necessary condition for such sets of discontinuities proved by
	 S.G. Slobodnik in 1976 is not sufficient. We also prove an analogon of this Slobodnik's result 
	 in separable Banach spaces.
	\end{abstract}
   
\markboth{L.~Zaj\'{\i}\v{c}ek}{Functions continuous on all lines}

\maketitle

\section{Introduction}

Separately continuous functions on $\R^n$ (i.e. functions continuous on all lines parallel
 to an coordinate axis) and also linearly continuous functions (i.e. functions continuous on all lines) were investigated in a number of articles, see the survey \cite{CM}. Note that linearly
 continuous functions are well-defined in any linear space  and recent articles \cite{Za} and \cite{BM}
 investigate them also in Banach (and even more general) spaces.

It appears that linearly continuous functions are much ``more close to continuous functions''
 than separately continuous functions.
First note that, by Lebesgue's result of \cite{Le},
\begin{equation}\label{leb}
\text{each separately continuous function on $\R^n$ belongs to the $(n-1)$-th Baire class}
\end{equation}
 (and that the number $n-1$  is optimal, cf. \cite{CM}). On the other hand, it was proved independently 
 (answering a question posed in  \cite{CM})  in  \cite{Za} and \cite{BM}
 that each linearly continuous function on $\R^n$ belongs to the first Baire class.

A natural question how small must be members of the families (where $D(f)$ denotes the set of discontinuity points of $f$)
$$ \D_s^n:= \{ D(f):\ f\  \text{is  a separately continuous function on}\ \R^n\}$$ 
and 
$$ \D_l^n:= \{ D(f):\ f\  \text{is  a linearly continuous function on}\ \R^n\}$$ 

was considered in several works, see \cite{CM}.
 Clearly,  $D_l^n \subset \D_s^n$ and each set from $\D_s^n$ is an $F_{\sigma}$ set. It appears that the 
 sets from $D_l^n$ ``must be essentially smaller'' than those from $\D_s^n$.

The following complete characterization of sets  from $\D_s^n$ was given
	 in \cite{Ker} (and was proved independently by another method in \cite{Sl}).
	
	\begin{theorem}\label{Ker} (R. Kershner, 1943)
	
	A set $M \subset \R^n$ belongs to $\D_s^n$
	 if and only if $M$ is an $F_{\sigma}$ set and the orthogonal projection of $M$ onto each $(n-1)$-dimensional
	 coordinate hyperplane is a first category (= meager) set.
	\end{theorem}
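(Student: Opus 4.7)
My plan is to prove the two directions of the characterization separately.

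For the necessity direction, the $F_\sigma$-property of $D(f)$ follows from the standard oscillation decomposition $D(f) = \bigcup_k \{x \in \R^n : \osc_f(x) \geq 1/k\}$. The substantive claim is the projection condition. Fix a coordinate index $i$, let $\pi_i \colon \R^n \to \R^{n-1}$ denote the orthogonal projection along the $i$-th axis, and set $F_k = \{\osc_f \geq 1/k\}$. Via $\sigma$-compactness it suffices to show that whenever $K \subset F_k$ is compact, $\pi_i(K)$ is nowhere dense in $\R^{n-1}$. Assume for contradiction that $\pi_i(K)$ contains an open ball $B$, and for each $x' \in B$ select $t(x') \in \R$ with $p(x') \in K$, where $p(x')$ is obtained by reinserting $t(x')$ as the $i$-th coordinate; then $\osc_f(p(x')) \geq 1/k$. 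Now combine two flavors of separate continuity: along each line parallel to the $i$-th axis $f$ is continuous, and along each line parallel to any of the other $n-1$ coordinate axes $f$ is continuous as well. Applying the Baire category theorem on $B$ to the countable family of closed sets indexed by axial moduli of continuity, one locates an $x'_0 \in B$ at which all $n$ axial oscillations of $f$ at $p(x'_0)$ are simultaneously small, forcing $\osc_f(p(x'_0)) < 1/k$ and contradicting $p(x'_0) \in F_k$.

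For the sufficiency direction, given an $F_\sigma$ set $M \subset \R^n$ whose projection onto every coordinate hyperplane is meager, the approach is constructive. Write $M = \bigcup_j K_j$ with $K_j$ compact; each $\pi_i(K_j)$ is closed and contained in the meager set $\pi_i(M)$, hence nowhere dense. I would construct, for each $j$, a separately continuous $f_j \colon \R^n \to \R$ satisfying $D(f_j) = K_j$, bounded by $2^{-j}$, and identically zero outside a relatively compact neighborhood of $K_j$. Then $f := \sum_j f_j$ converges uniformly along each axial line (hence is separately continuous), and $D(f) = \bigcup_j D(f_j) = M$. For a single compact $K$ with nowhere-dense projections and a countable dense subset $\{a^{(\ell)}\} \subset K$, the natural candidate for $f_j$ is a series $\sum_\ell c_\ell g_\ell$, where each $g_\ell$ is a rescaled version of the classical separately continuous bump (generalizing the two-dimensional $xy/(x^2+y^2)$ example) centered at $a^{(\ell)}$, with weights $c_\ell$ small enough to guarantee uniform axial convergence.

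The main obstacle I expect is in the sufficiency, namely verifying that $D(f_j)$ equals all of $K_j$ rather than only the countable subset $\{a^{(\ell)}\}$ where one places bumps. The nowhere-dense projection hypothesis is exactly the tool needed: at an arbitrary $p \in K_j$ one must find approach points in every neighborhood of $p$ along which the oscillation of a nearby bump $g_\ell$ (with $a^{(\ell)} \to p$) is detectable, and these approach points must lie off $K_j$ along each coordinate line through $p$; the nowhere-dense projections guarantee such points are dense near $p$.
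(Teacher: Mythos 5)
The paper only \emph{quotes} this theorem as a classical result of Kershner (proved independently by Slobodnik) and gives no proof, so your argument has to stand on its own — and it does not. In the necessity direction the concluding implication is false: knowing that all $n$ axial oscillations of $f$ at $p(x'_0)$ are small cannot force $\osc_f(p(x'_0)) < 1/k$. For a separately continuous $f$ \emph{every} axial oscillation is identically zero at \emph{every} point, yet $f$ may be discontinuous; the very example you invoke later, $f(x,y)=xy/(x^2+y^2)$ with $f(0,0)=0$, has both axial oscillations equal to $0$ at the origin and joint oscillation equal to $1$ there. So the quantity your Baire category argument controls is the wrong one: what is needed is a \emph{uniform} (equicontinuity-type) bound over a whole neighbourhood in the remaining variables, and extracting that from separate continuity is precisely the nontrivial content of Kershner's proof. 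Moreover, the sets ``indexed by axial moduli of continuity'' depend on your arbitrary selection $x'\mapsto t(x')$, so the closedness needed to apply Baire on $B$ is not available as written.

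The sufficiency direction has a second structural problem, which you partly anticipate but which is not a mere verification issue. If $\|g_\ell\|_\infty\le 1$ and the weights are summable, then $f_j=\sum_\ell c_\ell g_\ell$ converges uniformly on all of $\R^n$, and a uniform limit of functions each continuous off a single point is continuous off the countable set $\{a^{(\ell)}\}$: for $p\notin\{a^{(1)},\dots,a^{(L)}\}$ one has $\osc(f_j,p)\le 2\,\bigl\| f_j-\sum_{\ell\le L}c_\ell g_\ell\bigr\|_\infty\to 0$. Hence $D(f_j)$ is countable and cannot equal an uncountable $K_j$, no matter how the ``small enough'' summable weights are chosen. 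To make every point of $K_j$ a discontinuity point one must keep the bump heights bounded below (so $\sum_\ell c_\ell=\infty$) and instead use the nowhere-dense projections to arrange the supports so that each axial line meets only a summable total weight of them, which is what salvages separate continuity; designing that arrangement, proving the oscillation stays bounded below at every point of $K_j$, and checking that no discontinuities appear outside $M$ constitute the real work of the sufficiency direction, and they are missing here.
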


	This characterization
	 shows that each member of $\D_s^n$ is a first category set, but it can have positive Lebesgue measure (even
	 its complement can be Lebesgue null, cf. \cite{CM}). On the other hand (see Remark \ref{oslob} (a) below) all members of 
	  $D_l^n$ are Lebesgue null.
		
		Probably the first result concerning the system  $D_l^n$ was published in 1910 by W.H Young and G.C. Young
		\cite{YY}; they constructed a linearly continuous function
 on $[0,1]^2$ for which $D(f)$ is uncountable in every nonempty open set.

A.S. Kronrod in 1945 (see \cite[p. 268]{Sh}, \cite[p. 28]{CM}) considered the natural problem to find a complete characterization
 of sets from the system  $D_l^2$. 

As a partial solution of ($n$-dimensional) Kronrod's problem, Slobodnik   proved a theorem (\cite[Theorem 6]{Sl})
  whose obvious reformulation reads as follows.

\begin{theorem}\label{Slob} (Slobodnik, 1976)
Let $M \in D_l^n$. Then we can write  $M = \bigcup_{k=1}^{\infty} B_k$, where each $B_k$ has the following properies.
\begin{enumerate}
\item  $B_k$ is a compact subset of a  Lipschitz hypersurface $L_k$.

\item The orthogonal projection of $B_k$ onto each $(n-1)$-dimensional
	  hyperplane $H \subset \R^n$ is nowhere dense in $H$.
\item  For each $c \in \R^n \setminus B_k$, the set $\{\frac{x-c}{\|x-c\|}:\ x \in B_k\}$ is nowhere dense in
 the unite sphere $S_{\R^n}$.
\end{enumerate}
\end{theorem}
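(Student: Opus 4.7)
My plan is to decompose $M$ first by the size of the oscillation of $f$, then by a ``stable direction'' selected at each point using linear continuity, and finally to refine the pieces to secure the nowhere-density conditions by Baire-category arguments on families of lines.

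Since $\osc(f,\cdot)$ is upper semicontinuous, the sets $D_k := \{x \in \R^n : \osc(f,x) \geq 1/k\}$ are closed and $M = \bigcup_k D_k$. Each $D_k$ is $\sigma$-compact, so it suffices to decompose a given compact $K \subset D_k$ into countably many compact pieces satisfying (i)--(iii). At every $x \in K$, linear continuity yields, for each $v \in S_{\R^n}$, some $\delta(x,v) > 0$ with $|f(x+tv) - f(x)| < 1/(3k)$ whenever $|t| < \delta(x,v)$. My next step is to extract, by a Baire-category argument on the sphere (exploiting that $f$ is already known to be of Baire class $1$), a ``stable direction'' $v(x) \in S_{\R^n}$ together with a radius $\delta(x) > 0$ so that the above bound holds uniformly for $v$ in a spherical neighborhood of $v(x)$. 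Covering $S_{\R^n}$ by finitely many small caps and discretizing $\delta(x)$, I would partition $K$ into countably many compact pieces; on each such piece $B$ one may assume $v(x)$ lies in a fixed small cap around some $e \in S_{\R^n}$ and $\delta(x) \geq \delta_0 > 0$.

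I then plan to convert this uniform quantitative control into a Lipschitz graph structure. For $x,y \in B$ close enough, the displacement $y-x$ cannot have a large component in direction $e$ compared with its perpendicular component: otherwise the line through $x$ and $y$ would lie in the ``good cone'' from $x$, forcing $f$ along that segment to stay within $1/(3k)$ of $f(x)$, which is inconsistent with $\osc(f,y) \geq 1/k$ together with the fact that the ``bad'' directions at $y$ are concentrated in cones transverse to $e$. A careful book-keeping of these comparisons produces a Lipschitz graph $L$ over $e^\perp$ containing $B$, establishing (i).

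For (ii) and (iii) I plan to refine the pieces still further, by Baire-category arguments on pencils of lines. Roughly, if the orthogonal projection of $B$ onto a hyperplane $H$ were dense in some open set, one could find transversal lines intersecting $B$ in densely many points where $\osc(f,\cdot) \geq 1/k$; the linear continuity of $f$ on each such line, combined with the Lipschitz-graph structure, would then yield a contradiction via a Baire argument applied to the family of such lines. A parallel scheme using pencils of lines through a fixed external point $c$ would deliver (iii). The main obstacle I anticipate is (iii): the conclusion must hold simultaneously for \emph{every} external $c$, so the decomposition must be carried out using conditions uniform in $c$. A natural route is to pre-refine by a countable basis of open sets in the ``sphere bundle'' of directions over $\R^n$, ensuring that each final $B_k$ has the nowhere-density property from every vantage point at once; passing from a countable dense set of vantage points $c$ to all $c \notin B_k$ is the delicate step.
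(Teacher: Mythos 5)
Your proposal attempts a from-scratch proof, whereas the paper deduces Theorem \ref{Slob} from Proposition \ref{anslo}, which is assembled from three ingredients: the Banakh--Maslyuchenko characterization (Theorem \ref{banmas}, giving $D(f)$ as a countable union of closed $l$-miserable sets), the covering of $D(f)$ by countably many Lipschitz hypersurfaces from \cite[Corollary 4.2]{Za}, and Lemma \ref{proj}, which shows that an $l$-miserable set has first category linear and radial projections; compactness then upgrades ``first category'' to ``nowhere dense''. Measured against that, your sketch has two genuine gaps. First, the ``stable direction'' step fails as stated: for fixed $x$ the sets $S_j=\{v\in S_{\R^n}:\ |f(x+tv)-f(x)|\le 1/(3k) \text{ for } |t|\le 1/j\}$ cover the sphere, but they need not be closed (because $f$ is discontinuous off the line), so the Baire category theorem only yields a $j$ and a cap in which $S_j$ is \emph{dense}, not a cap on which the bound holds uniformly. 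Your cone argument then needs the specific directions $(w-x)/\|w-x\|$, for the witnesses $w$ of $\osc(f,y)\ge 1/k$ near $y$, to be good directions for $x$; the witnesses form only a sequence converging to $y$, and there is no reason their directions from $x$ should meet a merely dense set of good directions. Repairing this requires a genuinely different bookkeeping (e.g., a decomposition indexed by a countable set of directions and rational radii), not just ``careful book-keeping''.

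Second, and more seriously, conditions (ii) and (iii) are where the real content lies, and your plan for them has the wrong shape: you propose to ``refine the pieces'' by Baire arguments on pencils of lines, but any refinement driven by a particular hyperplane $H$ or vantage point $c$ produces pieces depending on $H$ or $c$, while the theorem demands one decomposition working for every $H$ and every $c\notin B_k$ simultaneously. You correctly flag this as the delicate step but do not resolve it, and passing from a countable dense set of vantage points to all $c$ does not work, since nowhere-density of the radial projection is not stable under perturbing $c$. The paper's resolution is structural rather than a refinement: the pieces $B_k$ are fixed once and for all as $F_m\cap M_n\cap\{x:\|x\|\le p\}$ with $F_m$ closed and $l$-miserable, and then, for each \emph{individual} $c$ (or hyperplane), Lemma \ref{proj} decomposes $B_k$ in a $c$-dependent way into countably many sets with nowhere dense projections, using the closed $l$-neighbourhood $L$ with $F_m\subset\overline{X\setminus L}$; the $c$-dependence is harmless because only the conclusion ``the projection is meager'' is extracted, and the projection of the compact $B_k$ is closed, so meager implies nowhere dense. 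Without some such $c$-independent property of the pieces ($l$-miserability here, ``linear closedness'' in Slobodnik's original paper), the argument cannot close.
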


\begin{remark}\label{oslob}
\begin{enumerate}
\item [(a)]   
For the definition of a Lipschitz hypersurface see Definition \ref{liphyp}. Property (i) clearly implies that $M \subset \R^n$ is Lebesgue null.
\item [(b)]
The article \cite{Sh} (written independently on \cite{Sl}) contains results which are very close to Theorem \ref{Slob} with $n=2$.
\item [(c)]
Conditions (i) and (ii) clearly imply that in (i) we can write that $B_k$ is nowhere dense in $L_k$.
\item[(d)]
An equivalent reformulation of (iii) (used in \cite{Sl}) is the following: 

(iii)*\ \  For each $c \in \R^n \setminus B_k$ and each hyperplane $H \subset \R^n \setminus \{c\}$, the central
projection  from $c$ of $B_k$ onto  $H$ is nowhere dense in $H$.
\end{enumerate}
\end{remark}

Further interesting contributions to Kronrod's problem were proved in  \cite{CG1}  and 
 \cite{CG2}.
 Main results of \cite{CG1} read as follows. 

\begin{theorem}\label{CG}\  (K.Ch. Ciesielski and T. Glatzer, 2012)

\begin{enumerate}
\item If $n\geq 2$ and $f: \R^{n-1} \to \R$ is convex and $M \subset \graph f$ is nowhere dense in $\graph f$, then
 there exists a linearly continuous function $g$ on $\R^n$ such that $M \subset D(g)$.
\item If  $f: \R \to \R$ is $C^2$  smooth and $M \subset \graph f$ is nowhere dense in $\graph f$, then
 there exists a linearly continuous function $g$ on $\R^2$ such that $M \subset D(g)$.
\item  There exists  $f: \R \to \R$ having bounded derivative 
  and $M \subset \graph f$ which is nowhere dense in $\graph f$
such that there does not
 exist any linearly continuous function $g$ on $\R^2$ such that $M \subset D(g)$.
\end{enumerate}
\end{theorem}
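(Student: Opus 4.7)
Theorem \ref{CG} has three independent claims, which I would approach by three separate constructions.

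For part (i), the key geometric fact is that, because $f$ is convex, every non-vertical affine line $L \subset \R^n$ meets $\graph f$ either trivially, in a single point, or in a closed segment (since $s \mapsto t_0+sw - f(x_0+sv)$ is a concave function of $s$), while vertical lines meet $\graph f$ in exactly one point. I would pick a countable dense subset $\{p_k\}$ of $M$ and attach to each $p_k$ a small two-dimensional ``flag'' lying on one fixed side of the graph, with diameters decaying geometrically and chosen thin enough that each line intersects only finitely many flags transversally. Setting $g$ equal to a weighted sum of indicator functions of the flags (with small coefficients $\e_k$ and with a specified value on $M$) and verifying linear continuity reduces to the fact that each line-restriction has only finitely many jumps, all controlled by the convex geometry of the slices $L \cap \graph f$. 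One then verifies that the jumps at $p_k$ propagate to all of $M$, giving $M \subset D(g)$.

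For part (ii), one replaces convexity by the $C^2$-model: near $(a,f(a))$, the graph is a small perturbation of a parabola, so any affine line either meets $\graph f$ transversally at isolated points (with at most quadratic contact) or meets it tangentially at an isolated zero of order exactly two. Consequently the number of intersections with $\graph f$ inside a fixed bounded region is locally bounded for lines in any fixed compact family of directions. The construction is then parallel to (i): enumerate a countable dense subset of $M$, attach shrinking transversal flags, and estimate how many flags a given line can meet inside each ball.

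For part (iii) I would use Slobodnik's Theorem \ref{Slob} as the necessary condition to violate. The plan is to build a function $f$ with bounded derivative and a nowhere-dense-in-$\graph f$ subset $M$ whose geometry defeats any countable covering $M = \bigcup_k B_k$ by compact pieces of Lipschitz hypersurfaces satisfying Slobodnik's properties. Concretely, over a carefully chosen Cantor-like set $C \subset \R$ one would arrange that, for some auxiliary point $c \in \R^2 \setminus \graph f$, the central projection of $\{(x,f(x)) : x \in C\}$ from $c$ onto a fixed circle is dense in a full arc. Then for any covering $M = \bigcup_k B_k$, a Baire category argument forces some $B_k$ to inherit residual density of the projection, contradicting condition (iii) of Theorem \ref{Slob}. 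The main obstacle, and the hardest part of the proof, is precisely the simultaneous quantitative control: engineering $f$ (still Lipschitz, with bounded derivative) so that \emph{every} potential decomposition is killed by some viewpoint $c$, not just one fixed decomposition.
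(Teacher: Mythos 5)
First, a point of order: the paper you are working from does not prove Theorem \ref{CG} at all; it is quoted from Ciesielski and Glatzer \cite{CG1} purely as background, so there is no internal proof to compare yours against. Judged on its own merits, your proposal is a plausible outline of the known arguments, but it leaves the genuinely hard steps unproved. In parts (i) and (ii) the entire difficulty is the verification of linear continuity of the function $g=\sum_k \e_k \chi_{F_k}$ built from your ``flags'': you must show that for \emph{every} line $L$ and every point $p\in L$ (including the points of $M$ itself, where $g$ is to be discontinuous as a function of several variables), the sets $L\cap F_k$ with $\e_k\ge\e$ do not accumulate at $p$. Saying the flags are ``thin enough that each line intersects only finitely many flags transversally'' names the desired conclusion rather than establishing it. Moreover, the geometric fact you invoke for (i) is stated incorrectly: the zero set of a concave function of one real variable is either a closed interval or a set of \emph{at most two} points, so a secant line meets $\graph f$ in two points, not one; the two-point (crossing) case is exactly the one your flag placement must control.

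For part (iii) your strategy --- exhibit a compact nowhere dense $M\subset\graph f$ that violates a Slobodnik-type necessary condition for \emph{every} countable decomposition, via a Baire category argument applied to the pieces $M\cap B_k$ --- is the right one and is essentially what \cite{CG1} does. But the whole content of the result is the construction you defer to your last sentence: a differentiable $f$ with bounded (necessarily discontinuous) derivative and a compact nowhere dense $C$ such that every relative portion of $\graph f|_C$ has a somewhere dense projection. Two remarks. You do not need the central-projection condition (iii) of Theorem \ref{Slob}; the orthogonal-projection condition (ii) suffices and is easier to arrange: it is enough that a single fixed map $x\mapsto ax+bf(x)$ sends every portion of $C$ onto a set with nonempty interior. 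This is exactly where the hypothesis ``bounded derivative but not $C^1$'' must enter, since by \cite[Lemma 4.1]{CG1} (the lemma the present paper itself uses in Proposition \ref{nepost}) a $C^1$ map sends nowhere dense sets to nowhere dense sets; consequently $C$ must be a fat Cantor set and $f'$ must be made to oscillate on $C$ in a quantitatively controlled way. Until that construction is carried out, part (iii) is not proved.
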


The article \cite{CG2} contains a full characterization of sets from  $D_l^2$. However, this solution
 of Kronrod's problem (in $\R^2$) is not quite satisfactory (cf. \cite[p. 29]{CM}) since it uses the topology on the set of all lines in $\R^2$ (and its applicability is unclear).

A nice applicable solution of Kronrod's problem in $\R^n$  was proved by T. Banakh and O. Maslyuchenko in \cite{BM}. It asserts that
 a subset of $ \R^n$ belongs to $D_l^n$  if and only if it is ``$\overline{\sigma}$-$l$-miserable'' (see
 Subsection  \ref{bama} for details). 
In \cite{BM}, several applications of this characterization are shown and other two applications are contained in the present article. 

In Section  \ref{mare} we use the Banakh-Maslyuchenko characterization as the main ingredience in the proof of the main result of the present article
 (Theorem \ref{hlavni}) which shows, answering a question from \cite{CG1}, that the function $f$ from
 Theorem \ref{CG} (iii) can be even $C^1$-smooth. More precisely, we use this characterization in the proof
 ot the basic Lemma \ref{DP}. It seems that any proof of Theorem \ref{hlavni} based on Lemma \ref{DP} needs a nontrivial inductive construction. The idea of our construction based on Lemma \ref{tlgr} and Lemma \ref{darb} is not difficult, but
 the detailed proof is unfortunately rather long and slightly technical.

As an easy but interesting consequence of our Theorem \ref{hlavni}, we obtain in Section \ref{SNCINS} that Slobodnik's necessary condition
 for sets from  $D_l^n$ is not sufficient (which  supports the opinion that  there exists no characterization of sets from $\cal D_l^n$ similar to Kershner's characterization of  sets from $\cal D_s^n$). 

In Section  \ref{SNC}, we prove an analogon of Slobodnik's result in separable Banach spaces 
for functions having the  Baire property (which
 improves \cite[Corollary 4.2]{Za}). This 
 result (Proposition \ref{anslo}) which easily implies Slobodnik's theorem in $\R^n$
  is an easy consequence of \cite[Corollary 4.2]{Za} and the Banakh-Maslyuchenko characterization.

\section{Preliminaries} 

\subsection{Basic notation}

In the following, by a Banach space we mean a real Banach space with a norm $\|\cdot\|$. If $X$ is a Banach space, we set
    $S_X:= \{x \in X: \|x\|=1\}$.  
		By $C[0,1]$ and  $C^1[0,1]$ we denote the set of all continuous and $C^1$-smooth functions on $[0,1]$, respectively.
		 If $f \in C[0,1]$, then $\|f\|$ always denotes the supremum norm of $f$.
		
		The symbol $B(x,r)$ will denote the open ball with center $x$ and radius $r$. 
		We say that a set $Q$ is an $\ep$-net of a subset $A$ of a metric space, if 
		$Q \subset A \subset \bigcup_{x\in Q} B(x,\ep)$.

		The oscillation of a function $f$ on a set $M$ will be denoted by $\osc(f,M)$. By $\graph f$, $\supp f$ and $D(f)$, we denote the graph,
		the  closed support $\overline{\{x: f(x) \neq 0\}}$ and the set of discontinuity points of $f$, respectively.
		We will write  $f_n  \rightrightarrows f$ if the sequence $(f_n)$ uniformly converges to $f$.
		 The Lebesgue measure on $\R$ is denoted by $\lambda$.

In a metric space $X$, the system of all sets with the Baire property is the smallest $\sigma$-algebra containing all open sets and all first category sets. We  say that a function $f$ on $X$ has the Baire property 
if $f^{-1}(B)$ has the Baire property for all Borel sets $B \subset Y$ 
(see \cite[$\S$ 32]{Ku}).
We will use the following definition.  
\begin{definition}\label{liphyp}
Let $X$ be a Banach space. We say that $A \subset X$ is a Lipschitz  hypersurface 
 if there exists a $1$-dimensional linear space $V \subset X$, its topological complement
 $Y$ and a Lipschitz mapping $\vf: Y \to V$ such that $A= \{y + \vf(y):\ y \in Y\}$.
\end{definition}
It is easy to see that each Lipschitz  hypersurface is a closed set and that, if $X = \R^n$, then we can
 demand that $Y$ is an orthogonal complement of $V$. 

\subsection{Notation and two lemmas concerning tangent lines}

If $f \in C^1[0,1]$, $z \in [0,1]$ and $Z \subset [0,1]$,  then we use the following notation.
\begin{enumerate}
\item 
By $ A_{f,z}$ we denote the affine function  
$$ A_{f,z} (x)= f(z) + f'(z)(x-z),\ x \in \R.$$
\item
 By $T_{f,z}$ we denote the tangent line to $\graph f$ at the point $(z, f(z))$, i.e. 
 $$T_{f,z}:=  \graph (A_{f,z}).$$
\item
We set 
$$T_{f,Z}:= \bigcup_{z \in Z} T_{f,z}.$$
\end{enumerate}

We will need the following easy lemma.

\begin{lemma}\label{stkon}
Let   functions $f$ and $f_1,f_2,\dots$ belong to $C^1[0,1]$.
Suppose that $z$ and $z_1,z_2\dots$ belong to $[0,1]$, $(x,y) \in \R^2$
  and
$$  f_n  \rightrightarrows f,\ \ \  f'_n \rightrightarrows f',\ \ \ z_n \to z.$$
 Then the conditions $(x,y) \in  T_{f_n,z_n}$,\ $n=1,2,\dots$,  imply  $(x,y) \in  T_{f,z}$.
\end{lemma}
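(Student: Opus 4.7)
The plan is to pass to the limit directly in the equation defining the tangency condition. The hypothesis $(x,y) \in T_{f_n,z_n}$ translates, by the definitions of $T_{f_n,z_n}$ and $A_{f_n,z_n}$, into the scalar identity
\[
y = f_n(z_n) + f'_n(z_n)\,(x - z_n), \qquad n = 1,2,\dots
\]
To establish $(x,y) \in T_{f,z}$ it is enough to show that the right-hand side converges to $f(z) + f'(z)(x-z)$ as $n\to\infty$.

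First I would verify that $f_n(z_n) \to f(z)$. This is a standard two-term estimate: writing
\[
|f_n(z_n) - f(z)| \le |f_n(z_n) - f(z_n)| + |f(z_n) - f(z)|,
\]
the first summand is at most $\|f_n - f\|_\infty$ and tends to $0$ by uniform convergence, while the second tends to $0$ by continuity of $f$ and $z_n \to z$. The same argument, applied to the sequence $f'_n$ and the $C$-function $f'$ (using that $f \in C^1[0,1]$ and $f'_n \rightrightarrows f'$), gives $f'_n(z_n) \to f'(z)$.

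Next, since $x$ is fixed and $z_n \to z$, the factor $x - z_n$ converges to $x-z$. Combining this with the two convergences above, the right-hand side of the displayed identity has a limit equal to $f(z) + f'(z)(x-z) = A_{f,z}(x)$. Passing to the limit on the left yields $y = A_{f,z}(x)$, i.e.\ $(x,y) \in \graph A_{f,z} = T_{f,z}$, as required.

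There is no real obstacle here; the only point requiring a moment's attention is that uniform convergence of $f_n$ (respectively $f'_n$) to a continuous limit $f$ (respectively $f'$) is enough to guarantee convergence of the values at the moving points $z_n$, which is exactly the elementary lemma just invoked. The hypothesis $f \in C^1[0,1]$, ensuring continuity of $f'$, is what allows the same argument to be applied to the derivatives.
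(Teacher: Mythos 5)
Your proof is correct and follows essentially the same route as the paper: pass to the limit in the identity $y = f_n(z_n) + f'_n(z_n)(x-z_n)$, using that $f_n(z_n)\to f(z)$ and $f'_n(z_n)\to f'(z)$. The only difference is that you spell out the standard two-term estimate for these convergences, whereas the paper simply cites a textbook reference for the same fact.
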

\begin{proof}
 By 
the 							assumptions, we have  $f_n(z_n) + f'_n(z_n) (x-z_n) =y$, $ n\in \N$. Since
 $f_n(z_n) \to f(z)$ and $f'_n(z_n) \to f'(z)$ (see, e.g., \cite[Theorem 7.5, p. 268]{Du}), we obtain 
 $f(z) + f'(z) (x-z) =y$.
\end{proof}

The following lemma is also rather easy but is an important ingredient in our proof of Theorem \ref{hlavni}.

\begin{lemma}\label{tlgr}
Suppose that $f\in C^1[0,1]$ and $f'$ has infinite variation on an interval $[\alpha, \beta] \subset [0,1]$.
 Then
  there exist numbers $e$, $w$ such that  $\alpha < e < w < \beta$ and $(w,f(w)) \in T_{f,e}$.
\end{lemma}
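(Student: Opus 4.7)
The plan is to argue by contradiction using a sign analysis. Observe that the condition $(w,f(w))\in T_{f,e}$ is exactly $f(w)=f(e)+f'(e)(w-e)$, so if we set $\phi(e,w):=f(w)-f(e)-f'(e)(w-e)$, the conclusion is equivalent to saying that $\phi$ has a zero on the open triangle $U:=\{(e,w)\in\R^2:\alpha<e<w<\beta\}$. I assume for contradiction that $\phi$ is nowhere zero on $U$. Since $\phi$ is continuous on $U$ (because $f$ and $f'$ are continuous) and $U$ is connected, $\phi$ must have constant sign there; after possibly replacing $f$ by $-f$, which preserves all hypotheses and flips the sign of $\phi$, I may assume $\phi>0$ throughout $U$.

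The core of the proof is to fix any $w_0\in(\alpha,\beta)$ and look at the secant-slope function $g(e):=(f(w_0)-f(e))/(w_0-e)$ on $(\alpha,w_0)$. A direct application of the quotient rule gives the key identity $g'(e)=\phi(e,w_0)/(w_0-e)^2$, which is strictly positive by assumption; hence $g$ is strictly increasing on $(\alpha,w_0)$. Since $\lim_{e\to w_0^-}g(e)=f'(w_0)$, this forces $g(e)<f'(w_0)$ for all $e\in(\alpha,w_0)$. On the other hand, $\phi(e,w_0)>0$ rewrites as $g(e)>f'(e)$. Combining the two inequalities yields $f'(e)<f'(w_0)$ whenever $\alpha<e<w_0<\beta$.

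Since $w_0\in(\alpha,\beta)$ was arbitrary, $f'$ is strictly increasing on $(\alpha,\beta)$, and hence non-decreasing on the whole $[\alpha,\beta]$ by continuity. Its total variation on $[\alpha,\beta]$ is then the finite quantity $f'(\beta)-f'(\alpha)$, contradicting the hypothesis that $f'$ has infinite variation there. The only genuinely non-routine step---and the place I expect to spend the most thought---is spotting the identity $g'(e)=\phi(e,w_0)/(w_0-e)^2$, which converts the single sign condition on $\phi$ into monotonicity of a secant slope; once this is written down, everything else is a short chain of elementary manipulations, and the case $\phi<0$ has already been reduced to the case $\phi>0$ by the $f\mapsto-f$ symmetry noted at the start.
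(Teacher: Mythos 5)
Your proof is correct. Both you and the paper reduce the lemma to showing that $\phi(e,w):=f(w)-f(e)-f'(e)(w-e)$ vanishes somewhere on the open triangle $\{\alpha<e<w<\beta\}$, and both ultimately rest on the fact that infinite variation of $f'$ forbids monotonicity; but the two arguments run in opposite directions. The paper argues directly: since $f'$ is neither nondecreasing nor nonincreasing on $(\alpha,\beta)$, it produces one pair $(e_0,w_0)$ with $\phi<0$ and one pair $(e_1,w_1)$ with $\phi>0$ --- the key device being the choice $e_0:=\max\{x\in[e^*,w_0]:f'(x)=f'(e^*)\}$ together with the integral estimate $f(w_0)=f(e_0)+\int_{e_0}^{w_0}f'<f(e_0)+f'(e_0)(w_0-e_0)$ --- and then applies the intermediate value theorem along the segment joining the two parameter pairs. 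You instead argue by contradiction: constant sign of $\phi$ on the connected triangle, combined with the identity $g'(e)=\phi(e,w_0)/(w_0-e)^2$ for the secant slope $g(e)=(f(w_0)-f(e))/(w_0-e)$ and the limit $g(e)\to f'(w_0)$, forces $f'(e)<g(e)<f'(w_0)$ and hence monotonicity of $f'$, contradicting infinite variation. Your secant-slope identity replaces the paper's level-set maximality trick and yields a slightly slicker, more symmetric argument (one computation handles both signs via $f\mapsto-f$); the paper's version is more constructive in that it exhibits explicit points where $\phi$ changes sign, which is the same flavour of computation it reuses elsewhere (e.g.\ in Lemma 3.2). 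Both are complete and elementary.
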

\begin{proof}
First observe that there exist numbers  $e_0$, $w_0$, $e_1$, $w_1$ such that
 $\alpha <e_0 < w_0 < \beta$, $\alpha <e_1 < w_1 < \beta$    and 
\begin{equation}\label{ctyrib}
f(w_0) < f(e_0) + f'(e_0) ( w_0- e_0),\ \ \ f(w_1) > f(e_1) + f'(e_1) ( w_1- e_1).
\end{equation}
To construct $e_0$ and  $w_0$, note that $f'$ is not nondecreasing on $(\alpha, \beta)$ and thus we can
 choose numbers  $\alpha < e^* < w_0 < \beta$ with  $f'(e^*) > f'(w_0)$. Set 
$e_0:= \max \{x\in [e^*,w_0]:\ f'(x)= f'(e^*)\}$. Then we have
$$  f(w_0) = f(e_0) + \int_{e_0}^{w_0} f' < f(e_0) + f'(e_0) (w_0-e_0)$$
 and so $e_0$ and  $w_0$ satisfy \eqref{ctyrib}. The existence of $e_1$ and  $w_1$ follows quite analogously.

Now set 
$$ e(t):= t e_1 + (1-t) e_0,\ \ w(t):= t w_1 + (1-t) w_0,\ \ \ t\in [0,1].$$
 Then clearly  $e(0) =e_0, \ w(0)= w_0,\ e(1)= e_1,\ w(1)= w_1$ and  $e(t)<w(t),\ t\in [0,1]$.
 The function $g(t):= f(w(t)) - f(e(t)) - f'(e(t)) (w(t)- e(t)),\ t\in [0,1],$ is clearly continuous,
 $g(0) <0$ and $g(1)>0$. Consequently there exists $t^* \in (0,1)$ such that $g(t^*)=0$ and so 
 $e:= e(t^*)$ and $w:= w(t^*)$ have the required property.
\end{proof}

\subsection{Banakh-Maslyuchenko characterization}\label{bama}

The authors of \cite{BM} work in ``Baire cosmic vector spaces'' but we work in the present article
 in the more special context of  separable Banach spaces; so we present  basic definitions  from \cite{BM}
 in Banach spaces only.

\begin{definition}\label{miser}
Let $X$ be a Banach space and $A \subset X$.
\begin{enumerate}
\item
A set $V\subset X$  is called an $l$-neighborhood of  $A$ if for any
 $a \in A$ and $v \in X$ there exists $\ep>0$ such that $a +  [0,\ep) \cdot v \subset V$.
\item   $A$ is called $l$-miserable if $A \subset \overline{X \setminus L}$ for some closed
$l$-neighborhood $L$ of $A$.
\item $A$ is called $\overline{\sigma}$-$l$-miserable if $A$ is a countable union of closed $l$-miserable sets.
\end{enumerate}
\end{definition}

An immediate consequence of \cite[Theorem 1.5.]{BM} is the following result.

\begin{theorem}\label{banmas} \ ( T. Banakh and O. Maslyuchenko, 2020)

Let $X$ be a separable Banach space and $M \subset X$. Then the following conditions are equivalent.
\begin{enumerate}
\item  $M= D(f)$ for some linearly continuous function $f$ on $X$ which has the Baire property.
\item   $M$ is $\overline{\sigma}$-$l$-miserable.
\end{enumerate}
\end{theorem}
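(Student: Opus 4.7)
The theorem is presented as an immediate corollary of \cite[Theorem 1.5]{BM}, which proves the equivalence in the more general setting of so-called ``Baire cosmic vector spaces''; the plan is simply to verify that every separable Banach space is such a space (it has a countable network and is Baire by the Baire category theorem), so that the cited theorem applies. To reproduce the characterization directly in the separable Banach setting, I would proceed as follows.

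For the direction (i) $\Rightarrow$ (ii), I would decompose $D(f) = \bigcup_{n=1}^{\infty} D_n$ with $D_n := \{x \in X : \osc(f,x) \geq 1/n\}$. Each $D_n$ is closed, so it remains to show each is $l$-miserable. The closed $l$-neighborhood $L_n$ of $D_n$ would be constructed from linear continuity: for each $x \in D_n$ and $v \in X$, pick $\ep(x,v) > 0$ so that $\osc(f, \{x + tv : t \in [0,\ep(x,v)]\}) < 1/(3n)$, and let $L_n$ be the closure of the union of such segments. By construction $L_n$ is a closed $l$-neighborhood of $D_n$, and the required inclusion $D_n \subset \overline{X \setminus L_n}$ would follow from the Baire property together with the Baire-class-$1$ conclusion of \cite{Za}, \cite{BM}: the continuity points of $f$ form a residual set, and those sufficiently close to $D_n$ cannot lie in $L_n$, since $f$ oscillates by less than $1/n$ on each segment comprising $L_n$.

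For the direction (ii) $\Rightarrow$ (i), the plan is: given $M = \bigcup_n A_n$ with closed $l$-miserable pieces $A_n \subset \overline{X \setminus L_n}$, construct for each $n$ a bounded linearly continuous $f_n : X \to [0,1]$ with $D(f_n) = A_n$ and take $f := \sum_n 2^{-n} f_n$. Heuristically $f_n$ should equal $1$ on $A_n$, vanish off $L_n$, and interpolate continuously along every line: the $l$-neighborhood property guarantees that on any line through $a \in A_n$ the set $L_n$ contains a full one-dimensional interval around $a$, so one can arrange $f_n(a+tv) \to 1$ as $t \to 0$ in every direction $v$; meanwhile $A_n \subset \overline{X \setminus L_n}$ supplies sequences $y_k \to a$ with $f_n(y_k) = 0$, giving discontinuity precisely on $A_n$.

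The main obstacle would be arranging $f_n$ to be linearly continuous on \emph{every} line --- not only those meeting $A_n$ --- which requires a careful inductive construction exploiting the separability of $X$ to control the behaviour of $f_n$ on a countable dense collection of lines and to pass to all lines by a density and uniform-convergence argument. This is the technical heart of the Banakh--Maslyuchenko argument and would account for the bulk of the work; the Baire property of $f$ emerges as a by-product, since the $f_n$ so constructed are themselves Baire measurable.
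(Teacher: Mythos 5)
Your proposal takes the same route as the paper: the paper offers no internal proof of this statement, deriving it as an immediate consequence of \cite[Theorem 1.5]{BM} exactly as in your first paragraph (a separable Banach space is a Baire cosmic vector space, being completely metrizable with a countable network). The additional sketch of the Banakh--Maslyuchenko argument itself is heuristic and admittedly incomplete at its technical core, but it is not needed, since the paper --- like your primary plan --- simply invokes the cited theorem.
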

 Note that if $X= \R^n$, then each linearly continuous function on $X$ has the Baire property by
 \eqref{leb} and so Theorem \ref{banmas} gives a full characterization of  sets of discontinuities of
 linearly continuous functions on $\R^n$.

\section{Main result}\label{mare}

In this section we prove our main Theorem  \ref{hlavni} using the following basic lemma, whose rather easy proof is based on the Banakh-Maslyuchenko characterization (Theorem \ref{banmas}).

\begin{lemma}\label{DP}
Let $f \in C^1[0,1]$, let $\emptyset \neq P \subset (0,1)$ be a perfect nowhere dense set and let
$D \subset P $ be a countable dense subset of $P$.  Let, for each $d \in D$, two points  $u_d, v_d \in (0,1)$
 be given such that  $u_d < v_d < d$,
\begin{equation}\label{uvnitr}
 (d,f(d)) \in \interior  T_{f, [u_d,v_d] \cap P}
\end{equation}
and
\begin{equation}\label{Dep}
\text{the set \ $D_{\ep}:= \{ d \in D: d-u_d >\ep\}$\ is finite for each $\ep>0$.}
\end{equation}
Then there does not exist any linearly continuous function $g$ on $\R^2$ which is discontinuous
 at each point of the set  $\graph (f|_P)$.
\end{lemma}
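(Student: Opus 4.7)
My plan is to argue by contradiction. Suppose there existed a linearly continuous $g$ on $\R^2$ with $\graph(f|_P) \subset D(g)$. Since $\R^2$ is finite dimensional, $g$ automatically has the Baire property by \eqref{leb}, so Theorem \ref{banmas} gives a decomposition $D(g) = \bigcup_k M_k$ into closed $l$-miserable sets. As $\graph(f|_P)$ is a complete metric space (it is closed in $\R^2$), Baire applied to $\graph(f|_P) = \bigcup_k (M_k \cap \graph(f|_P))$ produces an index $k$, a point $d_* \in P$ and $\delta_0 > 0$ such that the graph over $P \cap (d_* - \delta_0, d_* + \delta_0)$ is contained in $M := M_k$. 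Let $L$ be a closed $l$-neighborhood of $M$ witnessing $M \subset \overline{\R^2 \setminus L}$.

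The heart of the proof is a second Baire argument that upgrades the (only pointwise) $l$-neighborhood property into uniform control of the length of the tangent segment from $(z,f(z))$ lying in $L$. Writing $\hat v(z) := (1,f'(z))/\sqrt{1+f'(z)^2}$, I would consider the sets
$$P_n := \bigl\{ z \in P \cap [d_* - \delta_0/2, d_* + \delta_0/2] : (z,f(z)) + [0,1/n]\cdot \hat v(z) \subset L \bigr\}, \quad n \in \N.$$
Continuity of $f$, $f'$ and closedness of $L$ make each $P_n$ closed, and since $L$ is an $l$-neighborhood of $M$ at every point $(z,f(z))$ with $z$ in the ambient interval, the $P_n$ cover it. Baire then produces $n$, $d_1 \in P$ and $\delta_1 > 0$ with $P \cap (d_1 - \delta_1, d_1 + \delta_1) \subset P_n$.

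The conclusion combines \eqref{Dep} and \eqref{uvnitr}. Setting $C := \sqrt{1 + \|f'\|_\infty^2}$ and $\ep := \min(\delta_1/2, 1/(4nC))$, finiteness of $D_\ep$ together with density of $D$ in $P$ lets me pick $d \in D \cap (d_1-\delta_1/2, d_1+\delta_1/2)$ with $d - u_d \leq \ep$; then $[u_d,v_d] \cap P \subset P_n$, so the forward tangent segment of length $1/n$ at each such $(z,f(z))$ lies in $L$. By \eqref{uvnitr}, choose a ball $B$ around $(d,f(d))$ inside $T_{f,[u_d,v_d] \cap P}$ of radius $r \leq \min((d-v_d)/2, 1/(4nC))$. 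For $p \in B$, write $p = (z,f(z)) + t(1,f'(z))$ with $z \in [u_d,v_d] \cap P$; the bounds on $\ep$ and $r$ force $t \in (0, 1/(2nC))$, and after rescaling to the unit direction $\hat v(z)$ the scalar is at most $1/(2n) < 1/n$. Hence $p \in L$, so $B \subset L$, contradicting $(d,f(d)) \in M \subset \overline{\R^2 \setminus L}$.

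The main obstacle I expect is that the $l$-neighborhood condition is only pointwise, with no uniform radius at distinct points of $M$. A single Baire application locating $M$ is therefore not enough; the second Baire step extracting a uniform length $1/n$ of admissible tangent segments in $L$ over a whole sub-neighborhood of $P$ is the crucial device. Once this uniformity is in hand, hypotheses \eqref{uvnitr} and \eqref{Dep} are tailored precisely so that every point of a small open ball around $(d,f(d))$ lands on such a short tangent segment, producing the contradiction.
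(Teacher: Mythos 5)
Your proof is correct, and after the common opening (contradiction, Theorem \ref{banmas}, and a first Baire localization of $\graph (f|_P)$ into a single closed $l$-miserable set with closed $l$-neighbourhood $L$) it takes a genuinely different route from the paper's. The paper runs a nested-interval induction: at stage $n$ it uses $(d_n,f(d_n))\in\overline{\R^2\setminus L}$ together with \eqref{uvnitr} to find a nonempty open $W\subset B((d_n,f(d_n)),1/n)\cap(\R^2\setminus L)\cap T_{f,[u_{d_n},v_{d_n}]\cap P}$, hence an interval $[a_n,b_n]$ of points whose tangent lines meet $W$; condition \eqref{Dep} forces these intervals to shrink onto a limit point $p\in P$, and the points of $\R^2\setminus L$ so produced lie on $T_{f,p}$ and accumulate at $(p,f(p))$, so $L$ fails to be an $l$-neighbourhood at $p$. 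You instead insert a second Baire argument inside the compact set $P\cap[d_*-\delta_0/2,d_*+\delta_0/2]$ to extract a \emph{uniform} length $1/n$ of forward tangent segments contained in $L$ over a relative neighbourhood of some $d_1\in P$, then use \eqref{Dep} to place $[u_d,v_d]$ inside that neighbourhood and \eqref{uvnitr} together with the elementary bound $0<t\le (d-u_d)+r$ on the tangent parameter to cover a whole ball $B$ around $(d,f(d))$ by such short segments; $B\subset L$ then contradicts $(d,f(d))\in\overline{\R^2\setminus L}$ directly. The hypotheses play the same roles in both arguments, but your version trades the paper's somewhat delicate inductive construction for one extra Baire step and is, as a result, shorter and arguably cleaner: the paper contradicts the $l$-neighbourhood property at a limit point, while you contradict the miserability condition at a point of $D$ itself. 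Two routine points worth making explicit: $\delta_1$ should be taken small enough that $(d_1-\delta_1,d_1+\delta_1)$ stays inside the interval produced by the first Baire step (so that $(d,f(d))$ really lies in the localized miserable set), and the nonempty relative interior produced by the second Baire step should be arranged to have the form $P\cap(d_1-\delta_1,d_1+\delta_1)$ with $d_1\in P$.
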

\begin{proof}
Suppose to the contrary that such a function $g$ exists. By Theorem \ref{banmas} there
 exists a $\overline \sigma$-$l$-miserable set $A \subset \R^2$ such that $\graph (f|_P) \subset A$.
 Let $A_1, A_2,\dots$ be closed $l$-miserable subsets of $\R^2$ such that $A= \bigcup_{n=1}^{\infty} A_n$.
 Since the set  $\graph (f|_P)$ is closed in $\R^2$, by the Baire theorem there exists $k \in \N$
 such that the closed set $A_k \cap \graph (f|_P)$ is not nowhere dense in $\graph (f|_P)$ and so there exists
 an interval $(a,b) \subset [0,1]$ such that $P \cap (a,b) \neq \emptyset$ and 
 $\graph (f|_{P\cap (a,b)}) \subset A_k$. Since $A_k$ is $l$-miserable, we can choose a closed $l$-neighbourhood
 $L$ of $A_k$ such that $A_k \subset \overline H$, where $H:= \R^2 \setminus L$. 
Further choose $K>0$ such that $\|f'\| \leq K$.

 Now we will construct inductively a sequence
 of intervals $[a_n,b_n]$, $n=0,1,\dots$, such that for each $n\geq0$ the following two conditions hold.
\smallskip

(C1)\ \ $[a_n,b_n] \subset (a,b)$,\ \ $(a_n,b_n) \cap P \neq \emptyset$\ \ and\ \ $n (b_n-a_n) < 1$.

\smallskip

(C2)\ \ If $n\geq 1$, then $[a_n, b_n] \subset (a_{n-1},b _{n-1})$ and for each $x \in [a_n,b_n]$
 there exists a point $z_x^n \in T_{f,x} \cap H$ such that $\|z_x^n - (x,f(x))\| < 3(K+1)/n$. 
\smallskip

We can clearly choose  $[a_0,b_0]$ such that condition (C1) holds for $n=0$.

Further suppose that $n\geq 1$ and we have defined $[a_{n-1},b_{n-1}]$ such that
 condition (C1) holds for ``$n=n-1$''. 
Choose $p_n \in P \cap (a_{n-1},b_{n-1})$ and $\delta_n>0$ such that
 $[p_n- \delta_n, p_n+ \delta_n] \subset   (a_{n-1},b_{n-1})$. Since $P$ is perfect, the set
 $D \cap (p_n- \delta_n, p_n+ \delta_n)$ is infinite and so by \eqref{Dep} we can choose 
$d_n \in D \cap (p_n- \delta_n, p_n+ \delta_n)$ such that $[u_{d_n}, v_{d_n}] \subset (a_{n-1},b_{n-1})$ 
 and $d_n- u_{d_n} < 1/n$.

We know  that  $(d_n,f(d_n)) \in \graph f|_{P \cap (a,b)}  \subset A_k \subset \overline{H}$. Thus,
 since  $(d_n,f(d_n)) \in \interior  T_{f, [u_{d_n},v_{d_n}] \cap P}$ 
by \eqref{uvnitr} and $H$ is open, we can choose an open set $W \neq \emptyset$ such that
\begin{equation}\label{W}
  W \subset B((d_n,f(d_n)), 1/n) \cap H \cap  T_{f, [u_{d_n},v_{d_n}] \cap P}.
	\end{equation}
Consequently we can choose $x_n \in [u_{d_n},v_{d_n}] \cap P$ for which $T_{f,x_n} \cap W \neq \emptyset$.
Since $f \in C^1[0,1]$ and $W$ is open, it is easy to see that there exists an open neighbourhood
 $(a_n,b_n)$ of $x_n$ such that  $[a_n,b_n] \subset (a_{n-1},b_{n-1})$, $b_n-a_n < 1/n$ and
 $T_{f,x} \cap W \neq \emptyset$ for each $x \in [a_n,b_n]$. So we can choose for each 
 $x \in [a_n,b_n]$ a point $z_x^n = (\alpha_x^n, \beta_x^n)   \in T_{f,x_n} \cap W$.
Obviously  (C1) holds and  $[a_n, b_n] \subset (a_{n-1},b _{n-1})$ and thus it is sufficient to check
 that  $\|z_x^n - (x,f(x))\| < 3(K+1)/n$. To this end first observe that $|x- \alpha_x^n|<3/n$ since
 we easily see that $|d_n - \alpha_x^n|<1/n$, $|d_n- x_n| < 1/n$ and $|x_n-x|< 1/n$. Since the absolute value
 of the slope of the tangent $T_{f,x}$ is at most $K$, we obtain that 
$$ \|z_x^n - (x,f(x))\| \leq \sqrt{|x- \alpha_x^n|^2 + (K |x- \alpha_x^n|)^2} \leq (1+K) |x- \alpha_x^n|  < 3(K+1)/n.$$
 So we have finished our inductive costruction.

Now observe that the closedness   of  $P$ and condition (C1)  imply that $\bigcap_{n=0}^{\infty}
 [a_n,b_n] =  \{p\}$ for some $p\in P\cap (a,b)$. Applying (C2) to $p$ for ech $n\geq 1$ we obtain 
 points  $z_p^n \in T_{f,p} \cap H$,  $n\geq 1$, such that $z_p^n \to (p,f(p))\in A_k$. Consequently  $L=\R^2 \setminus H$ is not
 an $l$-neigbourhood of $A_k$, which is a contradiction.
\end{proof}
We will need also the following technical lemma.

\begin{lemma}\label{darb}
 Let $G$ and $\tilde G$ be functions from $C^1[0,1] $ and $g:= G'$, $\tilde g: =(\tilde G)'$.
 Let numbers $0<u<z< v < x<1$, $y\in \R$ and $\ep>0$, $\delta>0$, $\eta>0$ have the following properties:
\begin{equation}\label{cisla}
v + \delta + \eta <x ,\ \ v-u < \eta  \ \ \text{and}\ \ 6 \eta< \ep\, \delta,
\end{equation}
 \begin{equation}\label{Tzxy}
 (x,y) \in T_{G,z},
\end{equation}
\begin{equation}\label{ggt}
\|G - \tilde G\| \leq \eta,
\end{equation}
\begin{equation}\label{og}
\|g\| \leq 1\ \ \ \text{and}\ \ \ \osc(g, [u,v]) \leq \eta,
\end{equation}
\begin{equation}\label{zjzd}
\text{ there exist $u< s_1<s_2< v$ such that  $\tilde g(s_1) = g(s_1) - \ep$ and 
 $\tilde g(s_2) = g(s_2) + \ep$}.
\end{equation}
Then
\begin{equation}\label{obskou}
B((x,y), \eta) \subset T_{\tilde G, (u,v)}.
\end{equation}
\end{lemma}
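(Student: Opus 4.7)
My approach is to reduce the geometric statement \eqref{obskou} to a one-variable intermediate value argument. Fix an arbitrary point $(x',y') \in B((x,y),\eta)$, so $|x'-x|<\eta$ and $|y'-y|<\eta$. The inclusion $(x',y') \in T_{\tilde G,(u,v)}$ means exactly that there exists some $t \in (u,v)$ with $\tilde G(t) + \tilde g(t)(x'-t) = y'$. I therefore set
\[
\phi(t) := A_{\tilde G,t}(x') = \tilde G(t)+\tilde g(t)(x'-t),\quad t\in[u,v];
\]
since $\tilde G \in C^1[0,1]$, the function $\phi$ is continuous, so by the intermediate value theorem it suffices to prove that $\phi(s_1) < y' < \phi(s_2)$.

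The core estimates are obtained by comparing $\phi$ to the tangent values for $G$. Since $(x,y) \in T_{G,z}$, we have $y = A_{G,z}(x)$, and a direct computation gives
\[
A_{G,t}(x) - A_{G,z}(x) = \int_z^t [g(s)-g(z)]\,ds + (g(t)-g(z))(x-t).
\]
Combined with $\osc(g,[u,v])\leq\eta$, $|t-z|\leq v-u<\eta$ and $|x-t|\leq 1$, this yields $|A_{G,t}(x)-y|\leq\eta^2+\eta$ for every $t\in[u,v]$. Passing from $x$ to $x'$ costs an extra $|g(t)||x'-x|\leq\eta$, and passing from $G$ to $\tilde G$ costs $|\tilde G(t)-G(t)|\leq\eta$ together with the slope-difference term $(\tilde g(t)-g(t))(x'-t)$. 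At $t=s_1$, where $\tilde g(s_1)-g(s_1)=-\ep$, and using $x'-s_1 \geq x-\eta-v > \delta$ (from $v+\delta+\eta<x$), these four contributions combine to give
\[
\phi(s_1) \leq y + 3\eta + \eta^2 - \ep(x'-s_1) < y + 3\eta + \eta^2 - \ep\delta.
\]
The hypothesis $6\eta<\ep\delta$, together with the implicit bound $\eta<1$ (forced by $v+\delta+\eta<x\leq 1$), then gives $\phi(s_1) < y - 3\eta + \eta^2 < y - \eta < y'$. The symmetric computation at $t=s_2$, using $\tilde g(s_2)-g(s_2)=+\ep$, yields $\phi(s_2) > y + 3\eta - \eta^2 > y+\eta > y'$.

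The main obstacle is the bookkeeping: four distinct error sources---the Taylor-type error in replacing $A_{G,t}(x)$ with $A_{G,z}(x)=y$, the shift $x\mapsto x'$, the switch $G\mapsto\tilde G$, and the shift $y\mapsto y'$---must be simultaneously dominated by the single gain $\ep\delta$ produced by a slope difference of size $\ep$ acting across a horizontal separation of at least $\delta$. The inequality $6\eta<\ep\delta$ is engineered precisely to ensure this domination after all the $\eta$'s are collected, and the implicit bound $\eta<1$ from \eqref{cisla} is what lets us absorb the quadratic error $\eta^2$ into a linear one.
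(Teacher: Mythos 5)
Your proposal is correct and follows essentially the same route as the paper's proof: the same auxiliary function $t\mapsto A_{\tilde G,t}(x')$, the same intermediate value argument on $[s_1,s_2]$, and the same balancing of several $O(\eta)$ error terms against the gain $\ep(x'-s_i)>\ep\delta$ coming from the slope perturbation, with $6\eta<\ep\delta$ closing the estimate. The only (immaterial) difference is the bookkeeping: the paper groups the errors into five terms each bounded by $\eta$, while you get $3\eta+\eta^2$ and absorb the quadratic term using $\eta<1$, which indeed follows from \eqref{cisla}.
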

\begin{proof}
Consider an arbitrary point $(\overline x, \overline y) \in B((x,y), \eta)$. Then $x- \eta < \overline  x < x+ \eta$
 and $y- \eta < \overline  y < y+ \eta$. 
Let $s_1$, $s_2$ be as in \eqref{zjzd}. Set
$$  h(s):= A_{\tilde G,s}(\overline x) = \tilde G(s) + \tilde g(s) \cdot (\overline x-s),\ \ \ s \in [s_1,s_2].$$
It is sufficient to prove that
\begin{equation}\label{hzjd}
h(s_1) \leq y - \eta\ \ \ \ \text{and}\ \ \ h(s_2) \geq y + \eta.
\end{equation}
Indeed, \eqref{hzjd} and the continuity of the function $h$ imply that there exists $\overline s \in (s_1,s_2)$
 such that  $h(\overline s)= \overline y$; consequently $(\overline x, \overline y) \in T_{\tilde G, (u, v)}$
 and \eqref{obskou} follows.

Recall that  $h(s_1) = \tilde G(s_1) + \tilde g(s_1) \cdot (\overline x-s_1)$,\ $h(s_2) = \tilde G(s_2) + \tilde g(s_2) \cdot (\overline x-s_2)$. By   \eqref{Tzxy},
\begin{multline*}
y= G(z) + g(z) (x-z) = G(z) + g(s_1)(x-z) + (g(z)-g(s_1))(x-z) \\
= G(z) + g(s_1)((\overline x -s_1) + (x - \overline x) + (s_1-z)) + (g(z)-g(s_1))(x-z).
\end{multline*}
Using these equalities, $\eqref{cisla}$, $\eqref{ggt}$, $\eqref{og}$, the choice of $s_1$, $s_2$
    and the inequality $|G(z)- G(s_1)| <  \eta$  which  follows from  \eqref{og} and $|z-s_1| < \eta$    by the mean value theorem, we obtain 
\begin{multline*}
y - h(s_1)= (G(z) - G(s_1)) + (G(s_1) - \tilde G (s_1)) + (g(s_1) - \tilde g(s_1)) (\overline x-s_1) +
  g(s_1) (x - \overline x)\\ + g(s_1) (s_1-z) + (g(z)-g(s_1))(x-z) 
	 \geq - \eta  -\eta + \ep \delta - \eta-  \eta  - \eta  \geq  \eta.
\end{multline*}
Quite analogously we obtain $y- h(s_2) \leq -\eta$ and 	$\eqref{hzjd}$ follows.	
\end{proof}

The proof of our main Theorem \ref{hlavni} is based on the construction of $f \in C^1[0,1]$,
 $P \subset [0,1]$ (and also  $D$  and $u_d$, $v_d$)  which satisfy assumptions of Lemma \ref{DP}.
 We will set  $f:= \lim_{n\to \infty} f_n$ and $P:= \bigcap_{n=1}^{\infty} P_n$, where $(f_n)$
 and $(P_n)$   are defined by a nontrivial inductive construction, in which each $P_n$ is a finite union of compact intervals. For sets $P_n$ we will use the following notation.

\begin{definition}\label{prop}
If $\emptyset \neq P \subset \R$ is a finite union of nondegenerate compact intervals, we 
\begin{enumerate}
\item 
denote by $\cal C(P)$ the set of all components of $P$,  and set
\item  
$R(P):=\{d \in \R:\ d\ \text{is a right endpoint of some}\ I \in \cal C(P)\}$,
\item 
$\nu(P):= \max\{\lambda(I):\ I \in \cal C(P)\}.$
\end{enumerate}
\end{definition}

\begin{theorem}\label{hlavni}
There exist $f \in C^1[0,1]$ and a closed set $M \subset \graph f$ which is nowhere dense in $\graph f$
 such that there does not exist any linearly continuous $g$ on $\R^2$ which is discontinuous at each point of
  $M$.
\end{theorem}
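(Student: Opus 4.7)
The plan is to construct $f \in C^1[0,1]$ and a Cantor-type perfect nowhere dense set $P \subset (0,1)$, together with a countable dense subset $D \subset P$ and choices $u_d < v_d < d$ for $d \in D$, verifying the hypotheses of Lemma \ref{DP}; then $M := \graph(f|_P)$ does the job. The function $f$ and set $P$ will be produced as limits $f = \lim_n f_n$ in $C^1[0,1]$ and $P = \bigcap_n P_n$, where $(f_n) \subset C^1[0,1]$ is a Cauchy sequence controlled by $\|f_{n+1}-f_n\|_{C^1} \le \eta_n$ for a rapidly decreasing sequence $(\eta_n)$, and $(P_n)$ is a decreasing sequence of finite unions of nondegenerate compact intervals with $\nu(P_n) \to 0$, each component of $P_n$ containing at least two components of $P_{n+1}$. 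Setting $D := \bigcup_n R(P_n)$ and enumerating $D$ so that endpoints introduced later sit inside smaller components will automatically give \eqref{Dep}, by choosing each $u_d$ within distance $\nu(P_n)$ of the corresponding $d$.

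At the inductive step, for each newly created right endpoint $d \in R(P_{n+1})$ inside a component $I \in \cal C(P_n)$, I fix a short interval $[u_d,v_d]$ strictly inside $I$ and to the left of $d$, and modify $f_n$ only on $[u_d,v_d]$ to produce $f_{n+1}$. The modification is a ``sharp wiggle'' of $f'_n$ of horizontal width $\delta_n$ and amplitude $\ep_n$ satisfying $6\eta_n < \ep_n\delta_n$. The value $f_{n+1}(d)$ and the precise geometry of the wiggle are chosen so that $(d,f_{n+1}(d))$ lies on a tangent line $T_{f_n,z}$ with $z\in(u_d,v_d)$, the existence of such $z$ being ensured by Lemma \ref{tlgr} applied to the component $I$ (where $f'_n$ already has infinite variation thanks to previous wiggles). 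Lemma \ref{darb} with $G = f_n$, $\tilde G = f_{n+1}$ then produces a ball $B((d,f_{n+1}(d)),\eta_n) \subset T_{f_{n+1},[u_d,v_d]}$.

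To transfer this covering from $f_{n+1}$ to $f$, and to replace $[u_d,v_d]$ by $[u_d,v_d]\cap P$, I control subsequent $C^1$-perturbations $\|f_k - f_{n+1}\|_{C^1}$ so tightly that, by reapplying Lemma \ref{darb} at each stage $k>n$ (with $G = f_n$ and $\tilde G = f_k$), the ball $B((d,f_k(d)),\eta_n/2)$ remains inside $T_{f_k,[u_d,v_d]}$ uniformly in $k$. Moreover the wiggles at all later stages are placed so that their extremal points $s_1,s_2$ lie inside $P_k \cap [u_d,v_d]$, and the refinements that cut $P_k$ into pieces are placed only in the ``flat'' regions between wiggles so that $P_k \cap [u_d,v_d]$ is a $(1/k)$-net of $[u_d,v_d]$. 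For any target $(x,y) \in B((d,f(d)),\eta_n/4)$ and $k$ large, I can then pick $z_k \in P_k\cap[u_d,v_d]$ with $(x,y) \in T_{f_k,z_k}$; a convergent subsequence $z_{k_j} \to z \in P\cap[u_d,v_d]$ and Lemma \ref{stkon} give $(x,y) \in T_{f,z}$, establishing $B((d,f(d)),\eta_n/4) \subset T_{f,[u_d,v_d]\cap P}$, which is \eqref{uvnitr}. Lemma \ref{DP} then concludes.

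The main obstacle is the interaction of three decreasing parameters $\eta_n,\delta_n,\ep_n$ across infinitely many scales. The inequality $6\eta_n < \ep_n\delta_n$ must hold with $\delta_n \ll \nu(P_n) \to 0$, while $\sum_{k>n}\eta_k$ must be an arbitrarily small fraction of $\ep_n\delta_n$ (so later wiggles do not destroy the ball obtained at stage $n$). Simultaneously, the refinement $P_n \to P_{n+1}$ must keep $P$ nowhere dense while preserving density of $P \cap [u_d,v_d]$ in $[u_d,v_d]$ for every earlier $d$. Choosing $(\eta_n)$ decreasing extremely fast relative to $(\ep_n\delta_n)$, arranging the wiggles at stage $k$ disjointly in the ``flat gaps'' between all previously placed wiggles, and locating the removed middle parts of each component within these same flat gaps, resolves all of these constraints, but the bookkeeping is what makes the full argument technical.
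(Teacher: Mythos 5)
Your overall architecture is the same as the paper's (reduce to Lemma \ref{DP}, build $f=\lim f_n$ and $P=\bigcap P_n$ inductively, use Lemma \ref{tlgr} to create tangencies, Lemma \ref{darb} to fatten them into balls, and Lemma \ref{stkon} to pass to the limit), but there is a genuine gap in the maintenance step, i.e.\ in how you keep a ball of \emph{fixed} radius around $(d,f(d))$ inside $T_{f_k,[u_d,v_d]\cap P_k}$ for all $k$. You propose to reapply Lemma \ref{darb} with $G=f_n$, $\tilde G=f_k$, using the single wiggle created when $d$ was born. Even granting that the hypotheses of Lemma \ref{darb} persist (which forces all later wiggles to avoid the points $s_1,s_2$), its conclusion only places each target point $(\overline x,\overline y)$ on a tangent $T_{f_k,\overline s}$ with $\overline s$ produced by the intermediate value theorem somewhere in the fixed interval $(s_1,s_2)$; you have no control over \emph{where}. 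For Lemma \ref{stkon} to deliver a tangent point in $P$, you need $\overline s\in P_k$ for every $k$, which would essentially require $(s_1,s_2)\subset\bigcap_k P_k=P$ --- impossible since $P$ is nowhere dense. Your remark that the extremal points $s_1,s_2$ lie in $P_k$ does not help: it is the IVT solution set inside $(s_1,s_2)$, not the endpoints, that must meet $P_k$, and there is no reason it should. Note also that a \emph{fresh} stage-$m$ wiggle cannot rescue this with a single application of Lemma \ref{darb}: the hypothesis $6\eta<\ep\delta$ with $\ep$ equal to the stage-$m$ amplitude (which must tend to $0$ for $C^1$-convergence) forces the radius $\eta$ of the covered ball to shrink with $m$, so one new wiggle per $d$ per stage covers only a vanishing sub-ball.

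The missing idea, which is the heart of the paper's construction, is this: at every stage $m$ and for \emph{every} previously created $d$ (with its fixed target ball $B((d,f_k(d)),\eta_{k+1})$), one takes a finite $\eta_m$-net $Q_d$ of that ball, and for each net point $q$ one uses the inductive hypothesis to find a tangent point $z_{q,d}\in(u_d,v_d)\cap\interior P_{m-1}$ of $f_{m-1}$ through $q$, then places a fresh wiggle on a tiny interval around $z_{q,d}$ and declares that interval a component of $P_m$. Lemma \ref{darb} then yields $B(q,\eta_m)\subset T_{f_m,(z_{q,d}-\delta_m^*,w_{z_{q,d}})}$ with the tangent interval entirely inside $(u_d,v_d)\cap\interior P_m$, and the union over the $\eta_m$-net recovers the whole fixed ball. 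The number of wiggles per old point $d$ thus grows without bound as $m\to\infty$; a bounded-per-stage scheme such as yours cannot simultaneously keep the covered ball of fixed radius and keep the tangent points inside the shrinking sets $P_k$. Your first inductive step and the final limit argument are fine; it is this re-covering mechanism that you would need to add to make the proof work.
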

\begin{proof}
We will  define sequences  $(\eta_k)_{k=1}^{\infty}$,  $(P_k)_{k=1}^{\infty}$,  $(f_k)_{k=1}^{\infty}$
such that the following seven conditions hold for each $k\in \N$.
\begin{equation}\label{eta}
 \eta_1=1\ \  \text{and}\ \ 0< \eta_k < \frac{ \eta_{k-1}}{2}
,\ \ k\geq 2.
\end{equation}
\begin{equation}\label{pktvar}
\emptyset \neq  P_k \subset (0,1)\ \ \text{is a finite union of nondegenerate compact  intervals}.
\end{equation}
\begin{equation}\label{pkvl}
\nu(P_k) \leq \frac{1}{k},\ P_k \subset P_{k-1} \ \ \text{and}\ \ 
R(P_{k-1})\subset R(P_k)\ \ \text{if}\ \ k\geq 2.
\end{equation}
\begin{equation}\label{pkvl2}
 \text{Each}\ \  I \in \cal C(P_{k-1})\ \ \text{contains at least two elements of}\ \ \cal C(P_{k})\
\ \text{if}\ \ k\geq 2.
\end{equation} 
\begin{equation}\label{fktvar}
 f_k \in C^1[0,1]
\ \ \text{and}\ \ f'_k\ \ \text{has infinite variation on each interval}\ \ [\alpha,\beta] \subset [0,1].
\end{equation}
\begin{equation}\label{bis}
f_k(0) = 0\ \ \text{and}\ \ \|f_1'\| \leq \frac{1}{2}.
\end{equation}
\begin{equation}\label{fkvl}
\| f_{k}- f_{k-1}\| < \frac{\eta_k}{2}\ \ \text{and}\  \ \| f'_{k}- f'_{k-1}\| = 2^{-k} \ \ \text{if}\ \ k\geq 2.
\end{equation}
Moreover, for each $k\in \N$ and and each point $d$ from the set $ R^*_k$, where
\begin{equation}\label{defrh}
 R^*_1:= R(P_1)\ \ \text{and}\ \    R^*_k:= R(P_k) \setminus  R(P_{k-1})\ \ \text{for}\ \ k\geq 2,
\end{equation}
 we will define an interval $[u_d,v_d]$ such that the following two conditions hold:
\begin{equation}\label{uvpr}
  0<u_d<v_d<d,\  \ [u_d,d] \subset P_k      \ \ \ \text{and}\ \ \  3 \eta_{k+1} < d-v_d.
\end{equation} 
\begin{equation}\label{uvdr}
\text{If}\ k<l,\  \text{then}\ \  \ \ \ 
T_{f_l, (u_d,v_d) \cap \interior P_l}  \supset  B((d,f_k(d)), \eta_{k+1}).
\end{equation}
In the formulation of \eqref{uvpr} and \eqref{uvdr} we have used that  $R_1^*, R_2^*, \dots$ are 
 pairwise disjoint by \eqref{pkvl} and so $k$ is uniquely determined by $d$.
\medskip

In our inductive construction we will have defined,
after the $n$ th step  ($n\in \N$) of the construction, the numbers  $\eta_1,\dots,\eta_n$, the sets $P_1,\dots, P_n$, the functions
 $f_1,\dots, f_n$ and, if $n\geq 2$, for each  $1\leq k \leq n-1$ and $d \in R_k^*$ (see \eqref{defrh}), we will have defined an interval $[u_d,v_d]$,
 such that 
\begin{equation}\label{kdon}
 \text{seven conditions}\  \eqref{eta} - \eqref{fkvl}
\ \ \text{hold whenever}\ \ 1\leq k \leq n,
\end{equation} 
 \begin{equation}\label{kdonmj}
 \text{conditions}\   \eqref{uvpr}\ \text{and}\  \eqref{uvdr}
 \ \ \text{hold whenever}\ \ 1\leq k \leq n-1, \  d \in R^*_k\   \text{and}\  l\leq n
\end{equation} 
and
\begin{multline}\label{dz}
\text{for any point}\  d \in R^*_n \   
\text{there exists}\ 0< e < d\\
 \text{such that}\ [e,d] \subset P_n, \  e \in \interior P_n \   \text{and}\ (d,f_n(d)) \in T_{f_n, e}.
\end{multline}

\medskip

{\bf The first step.}
\smallskip

We set $\eta_1:= 1$. Choose (using e.g. \cite[Corollary 22, p. 143]{Br})  a  nowhere differentiable function $g \in C[0,1]$ with $\|g\| \leq 1/2$ and 
 set $f_1(x):= \int_0^{x} g,\ x \in [0,1]$. Then $f'_1 = g$ has infinite variation on each interval $[\alpha, \beta] \subset [0,1]$.
 Using Lemma \ref{tlgr} with $f:=f_1$, $\alpha:=0$ and $\beta:=1$, we can choose $0<e< w <1$ such that  $(w,f_1(w))\in T_{f_1,e}$
 and set $P_1:= [e/2, w]$. It is easy to check that   conditions   \eqref{kdon},  
   \eqref{kdonmj} and   \eqref{dz} hold for $n=1$.

\medskip

{\bf The inductive step.}
\smallskip

We suppose that $m\geq 2$ and the $(m-1)$ th step of the construction was accomplished.  In particular, we
 know that  conditions   \eqref{kdon},     \eqref{kdonmj} and   \eqref{dz} hold
 for $n=m-1$.

Our aim is to construct $\eta_m$, $P_{m}$, $f_{m}$ and an interval $[u_d,v_d]$ for each
 $d \in R_{m-1}^*$ such that  \eqref{kdon},     \eqref{kdonmj} and   \eqref{dz} hold
 for $n=m$.

First we choose, by the validity of \eqref{dz}
 for $n=m-1$, for each $d \in R^*_{m-1}$ a point $e=:z_d$  such that
\begin{equation}\label{ezd}
  0<z_d <d,\    [z_d,d] \subset P_{m-1}, \  z_d \in \interior P_{m-1} \  \ \text{and}\ \ (d,f_{m-1}(d)) \in T_{f_{m-1}, z_d}
\end{equation}
  and set 
	$$   
	Z^m_1:= \{z_d:\ d \in R^*_{m-1}\}.$$
	\smallskip
	 Further choose $\delta_m>0$ so small, that
	\begin{equation}\label{delta}
	\delta_m< \frac{d-z_d}{3} \ \ \text{for each}\ \ d \in R_{m-1}^*\ 
	 \text{and} \  \delta_m< \min\left(\frac{1}{2m}, \eta_{m-1}\right).
	\end{equation}

	Now we set $Z^m_2:=\emptyset$ if $m=2$ and, if $m\geq 3$, we define  $Z^m_2$ as follows.
	 In this case $R^*_k \neq \emptyset$ for each $1\leq k \leq m-2$ (see \eqref{kdon} and \eqref{pkvl2})
	 and for  such $k$ and $d \in R^*_k$ we have defined an interval $(u_d,v_d)$ such that, by the validity
	  \eqref{kdonmj} for $n=m-1$,
	\eqref{uvpr} holds and 
	\begin{equation}\label{prommj}
	T_{f_{m-1}, (u_d,v_d) \cap \interior P_{m-1}}  \supset  B((d,f_k(d)), \eta_{k+1}).
	\end{equation}
	 Choose $\eta_m>0$ so small that 
	\begin{equation}\label{eta2}
	 6 \eta_m < 2^{-m} \delta_m, \ \  \eta_m < \frac{\eta_{m-1}}{2} \ \ \text{and}\ \
	\end{equation}
	\begin{equation}\label{eta3}
	 6 \eta_m < 2^{-m} \frac{d-v_d}{3},\ \ \text{whenever}\ \ 1\leq k \leq m-2 \ \ \text{and}\ \ d \in R^*_k.
	\end{equation}
	Further,
	for every fixed $1\leq k \leq m-2$ and $d \in R^*_k$,  we choose a finite $\eta_{m}$-net  $Q_{d}$ of the ball
	  $B((d,f_k(d)), \eta_{k+1})$ and for each $q\in Q_{d}$ choose  by \eqref{prommj} a point
		$z_{q,d} \in (u_d,v_d) \cap \interior P_{m-1}$ such that $q \in T_{f_{m-1}, z_{q,d}}$.
		Now we define  $Z^m_2:= \{z_{q,d}:\   1\leq k \leq m-2,\ d \in R^*_k,\  q\in Q_{d}\}.$
		
		(Note that, as above, $k$ is uniquely determined by $d$; however  our construction alows 
		 cases when  $(q_1,d_1) \neq (q_2,d_2)$ and
		$z_{q_1,d_1} = z_{q_2,d_2}$.)
		
	Further choose $Z^m_3$  as an arbitrary finite set $Z^m_3\subset \interior P_{m-1} $ such that
	\begin{equation}\label{z3nepr}
 Z^m_3 \cap \interior I \neq \emptyset\ \ \text{for  each}\ \  I \in \cal C (P_{m-1})
\end{equation}
 and set 
	 $Z^m:= Z^m_1 \cup Z^m_2 \cup Z^m_3$.

	 Choose  $0<\delta_m^*< \delta_m$ so small that
	\begin{equation}\label{nadb}
	[z_{q,d}-\delta_m^*, z_{q,d}+\delta_m^*] \subset (u_d,v_d)\ \ \text{whenever}\ \ 1\leq k \leq m-2,\ d \in R^*_k\ \ \text{and}\ \  q\in Q_{d},
	\end{equation}
	\begin{equation}\label{list}
	\text{the intervals}\   \{ [z-\delta_m^*, z + \delta_m^*]:\ z \in Z^m\} \
	\text{ are pairwise disjoint subsets of}\   \interior P_{m-1},
	\end{equation}
	\begin{equation}\label{sjmale}
	\lambda \left( \bigcup_{z \in Z^m} [z-\delta_m^*, z + \delta_m^*]\right) < \frac{ \eta_m}{2},\ \ \text{and}
	\end{equation}
	\begin{equation}\label{osc}
	\osc(f'_{m-1}, [z-\delta_m^*, z+\delta_m^*]) \leq \eta_m\ \ \text{for each}\ \ z \in Z^m.
	\end{equation}
	Further choose a piecewise linear function $h_m
	\in C[0,1]$ such that
	\begin{equation}\label{hm}
	 \|h_m\| = 2^{-m},\ \ \supp h_m \subset \bigcup_{z \in Z^m} [z-\delta_m^*, z + \delta_m^*]\ \  \text{ and}
	\end{equation}
	\begin{multline}\label{zjzd2}
	\text{for every} \ z \in Z^m \ \text{ there exist}\  z- \delta_m^* <s_1^{z}< s_2^z < z    \\
		 \text{with}\ \ h_m(s_1^z)= -2^{-m},\  h_m(s_2^z)= 2^{-m}.
			\end{multline}
			Now we define $f_m$ by
			\begin{equation}\label{deffm}
			f_m(x) = f_{m-1}(x) + \int_0^x h_m,\ \ x \in [0,1].
			\end{equation}
			Then clearly  $f_m \in C^1[0,1]$ and, using \eqref{fktvar} for $k=m-1$, it is easy to
			 see that $f'_m´= f'_{m-1} + h_m$ has infinite variation on each interval $[\alpha,\beta] \subset [0,1]$.
			 
		So	by Lemma \ref{tlgr} we can find for each $z \in Z^m$ points $ z<  e_z < w_z < 
		 z + \delta_m^*$ such that
	 \begin{equation}\label{vw}
	(w_z,f_m(w_z)) \in T_{f_m,e_z}.
	\end{equation}
	For each $d \in R^*_{m-1}$, set
	\begin{equation}\label{defuv}
	u_d:= z_d - \delta_m^*,\ \ v_d:= w_{z_d}.
	\end{equation}
	
	To define $P_m$, assign to each  $d\in R(P_{m-1})$  a point  $c_d <d$ such that $c_d \in \interior P_{m-1}$,
	 $[c_d,d] \subset P_{m-1}$, $d-c_d < 1/m$ 
	 and $[c_d,d] \cap \bigcup_{z \in Z^m} [z-\delta_m^*, z + \delta_m^*] = \emptyset$,  and define
	\begin{equation}\label{defpm}
	P_m:= \bigcup_{z \in Z^m} [z-\delta_m^*, w_z] \cup \bigcup_{d\in R( P_{m-1})} [c_d,d].
	\end{equation}
	Thus we have constructed $\eta_m$, $f_m$, $P_m$, and an interval $[u_d,v_d]$ for each $d \in R^*_{m-1}$.
	
	\smallskip

	Our aim is now to prove that properties  \eqref{kdon}, \eqref{kdonmj} and \eqref{dz}
	 hold for $n=m$.
	
	First note that, by the above construction, 
		\begin{equation}\label{dvatroj}
		\eqref{defpm}\  \text{gives the decomposition of}\  P_m\ \text{into its components}.
		\end{equation}
		So, using \eqref{delta}, $\delta_m^* < \delta_m$ and $d-c_d < 1/m$ ($d \in R(P_{m-1})$), we obtain
		\begin{equation}\label{nipm}
		\nu(P_m) \leq 1/m.
		\end{equation}

	To prove that  \eqref{kdon} holds for $n=m$, it is sufficient (since we know that \eqref{kdon} holds for $n=m-1$)
	 to verify that conditions  \eqref{eta}-\eqref{fkvl} hold for $k=m$. These facts
	easily follow from the construction:
	
	Conditions
	\eqref{eta}   and  \eqref{pktvar} follow  from \eqref{eta2}  and \eqref{defpm}, respectively.
	Condition \eqref{pkvl} follows from \eqref{nipm},  \eqref{dvatroj} and   \eqref{list}.
	Condition \eqref{pkvl2} follows from  \eqref{dvatroj} and \eqref{z3nepr}.
	Condition \eqref{fktvar} is stated just after  \eqref{deffm}.
	Condition \eqref{bis} follows from \eqref{deffm} and the validity of \eqref{bis} for $k=m-1$.
	
	To prove 
	\eqref{fkvl}, observe that by \eqref{deffm} and \eqref{hm}  we have $\|f'_m - f'_{m-1}\| = \|h_m\| = 2^{-m}$ and, using
	 also \eqref{hm} and \eqref{sjmale}, we obtain $$|(f_m- f_{m-1})(x)| = \left| \int_{0}^x h_m\right| 
	 \leq \left|\int_{\supp h_m} |h_m| \right| < \frac{\eta_m}{2},\ \ \ x \in [0,1],$$ 
	 and so $\|f_m-f_{m-1}\| < \eta_m/2$.
	\smallskip
	
	Now we will show that \eqref{kdonmj} 
	 holds for $n=m$. Since we know that  \eqref{kdonmj} 
	 holds for $n=m-1$, it is sufficient to verify that
	\begin{equation}\label{uvpr2}
	\eqref{uvpr}  \ \text{holds if}\ k=m-1\ \text{and}\  d \in R_k^*
	\end{equation}
	and
	\begin{equation}\label{uvdr2}
	\eqref{uvdr} \ \ \text{holds if}\ \ 1\leq k \leq m-1,\ \ l=m\ \ \text{and}\ \ d\in R_k^*.
	\end{equation}
	
	To prove \eqref{uvpr2}, consider an arbitrary $d \in R_{m-1}^*$ and recall that $z_d \in Z^m_1 \subset Z^m$
	 and $u_d = z_d - \delta_m^*$, $v_d = w_{z_d}$ (see \eqref{defuv}. By \eqref{ezd} we
	 have $[z_d, d] \subset P_{m-1}$ 
	and so  \eqref{list}
	 and $z_d < w_{z_d} < z_d + \delta_m^*$ imply $0<u_d < v_d < d$ and $[u_d, d] \subset P_{m-1}$.
	 To prove $3 \eta_m < d-v_d$, observe that \eqref{delta} gives $d-z_d > 3 \delta_m$ and so, using
	 also $\delta_m^*< \delta_m$ and \eqref{eta2}, we obtain
	$$ d-v_d > d-z_d -\delta_m^* > 3 \delta_m - \delta_m >  3 \eta_m.$$
	So \eqref{uvpr2}
	 is proved.
	\smallskip

	To prove \eqref{uvdr2}, we will distinguish  cases {\bf a)} \ $k= m-1$ and {\bf b)} \ $1 \leq k < m-1$.
	\smallskip
	
	 {\bf a)}\ \ \ Consider an arbitrary
	$d \in R_{m-1}^*$.  Then  $z_d \in Z^m_1$
	 and $u_d = z_d - \delta_m^*$, $v_d = w_{z_d}$. By \eqref{ezd} we have
	\begin{equation}\label{rdel}
	(d,f_{m-1}(d)) \in T_{f_{m-1}, z_d}.
	\end{equation}
	Now we will show that the assumptions of Lemma \ref{darb} are satisfied for 
	$$ G= f_{m-1},\ \tilde G= f_m,\ u= u_d,\ z= z_d, \ v= v_d,\ x= d,\ y= f_{m-1}(d),\ \ep= 2^{-m},\ 
	 \delta= \delta_m,\ \eta= \eta_m.$$
	First we show that inequalities \eqref{cisla} hold:
	
	Using \eqref{eta2} and \eqref{delta} we obtain
	$$ v_d + \delta_m + \eta_m < (z_d+ \delta_m^*) + \delta_m + \frac{\delta_m}{6} < z_d + 3 \delta_m < d.$$
	Further we obtain  $v_d-u_d < 2 \delta_m^* < \eta_m$ by \eqref{sjmale} and
	 $ 6 \eta_m < 2^{-m} \delta_m$ by \eqref{eta2}.

	Condition \eqref{Tzxy} coincides with  \eqref{rdel} and \eqref{ggt} holds since we know that
	\eqref{fkvl} is valid for $k=m$. Condition \eqref{og} follows from \eqref{osc} and
	 the validity of \eqref{bis} and \eqref{fkvl} for each $ k \leq m-1$. Finally, condition
	 \eqref{zjzd} follows from \eqref{zjzd2} since $\tilde g - g = h_m$.  
	
	Thus conclusion \eqref{obskou} of Lemma \ref{darb} holds, i.e. 
	$B((d, f_{m-1}(d)), \eta_m) \subset T_{f_m, (u_d,v_d)}$. Since  $(u_d,v_d) \subset \interior P_m$   
	 by   \eqref{defuv} and \eqref{defpm}, we obtain that
	\eqref{uvdr} holds for
	 $k= m-1$, $l=m$ and our $d$. 
	\smallskip
	
	 {\bf b)}\ \ \ Consider  arbitrary $1 \leq k < m-1$ and
	$d \in R_k^*$. 
	 Then we have defined a finite  $\eta_{m}$-net  $Q_{d}$ of the ball
	  $B((d,f_k(d)), \eta_{k+1})$ and for each $q\in Q_{d}$ we have defined a point
		$z_{q,d} \in (u_d,v_d) \cap \interior P_{m-1}$ such that $q \in T_{f_{m-1}, z_{q,d}}$. 
		 
		 Now we will show that, for an arbitrary $q=:(x_q,y_q)\in Q_d$,  
		 the assumptions of Lemma \ref{darb} are satisfied for 
	\begin{multline*}
	G= f_{m-1},\ \tilde G= f_m,\ u=z_{q,d}- \delta_m^* ,\ z= z_{q,d}, \ v= w_{z_{q,d}}, \\ x= x_q,\ y= y_q,\ \ep= 2^{-m},\ 
	 \delta= \frac{d-v_d}{3}, \eta= \eta_m.
	\end{multline*}
	First we show that inequalities \eqref{cisla} hold:
	
	Using \eqref{nadb}, \eqref{eta3} and \eqref{uvpr} we obtain
	\begin{multline*}
	v+ \delta+ \eta = w_{z_{q,d}} + \frac{d-v_d}{3}  + \eta_m < v_d + \frac{d-v_d}{3} + \frac{d-v_d}{3} \\
	 = d - \frac{d-v_d}{3} < d - \eta_{k+1}  <  x_q =x.
	\end{multline*}
	Further we obtain the inequalities $ v-u = w_{z_{q,d}}- (z_{q,d}- \delta_m^* )  < 2 \delta_m^* < \eta_m = \eta$ by 
	\eqref{sjmale} and
	 $6 \eta =  6 \eta_m < 2^{-m}\frac{d-v_d}{3}= \ep \delta $ by \eqref{eta3}.

	Condition \eqref{Tzxy} holds since  $q \in T_{f_{m-1}, z_{q,d}}$  and \eqref{ggt} holds since we know that
	\eqref{fkvl} is valid for $k=m$. Condition \eqref{og} follows from \eqref{osc} and
	 the validity of \eqref{bis} and \eqref{fkvl} for each $ k \leq m-1$. Finally, condition
	 \eqref{zjzd} follows from \eqref{zjzd2} since $\tilde g - g = h_m$.

		 Thus assertion \eqref{obskou} of Lemma \ref{darb} holds, i.e.  
		$B(q,  \eta_m) \subset T_{f_m, (z_{q,d}- \delta_m^*,  w_{z_{q,d}})}$.
		 Note that
		 $ z_{q,d} \in Z_2^m \subset Z^m$ and so  $(z_{q,d}- \delta_m^*,  w_{z_{q,d}}) \subset (u_d,v_d) \cap   \interior P_m$ by \eqref{nadb} and \eqref{defpm}.

		Since    $Q_{d}$ is  a $\eta_{m}$-net of 
	  $B((d,f_k(d)), \eta_{k+1})$  
		 we obtain that \eqref{uvdr} holds for $l=m$ and our $k$ and $d$.
		\smallskip
		
		So we have proved \eqref{uvdr2}.
		It remains to prove that \eqref{dz} holds for $n=m$. So consider an arbitrary $d\in R_m^*$.
		 By \eqref{dvatroj} we obtain that there exists $z_d\in Z^m$ such that $d=w_{z_d}$
		 and   \eqref{vw}  shows that \eqref{dz} holds for $n=m$ (since the choice $e:= e_{z_d}$ works for our $d$).

		\smallskip

\smallskip

 So we have finished our inductive construction.   It is easy to see that we
 have defined the sequences  $(\eta_k)_{k=1}^{\infty}$,  $(P_k)_{k=1}^{\infty}$,  $(f_k)_{k=1}^{\infty}$
 and  intervals $[u_d,v_d]$ ($d\in R^*_k$, $k=1,\dots$) such that all nine properties
 \ \eqref{eta} - \eqref{uvdr}\ 
 hold for each $k \in \N$. 

Using these properties only, we will show that $f:= \lim_{n\to \infty} f_n$, $P:= \bigcap_{n=1}^{\infty} P_n$
 and $D:= \bigcup_{k=1}^{\infty} R_k^* = \bigcup_{k=1}^{\infty}  R(P_k)$ satisfy the assumptions of Lemma
 \ref{DP}. 

By \eqref{fkvl} we obtain that $\|f_m'-f_l'\| \leq 2^{-l}$ whenever $1 \leq l < m$ and therefore 
 the sequence $(f_k')$ uniformly converges to a function $\varphi \in C[0,1]$.
 Since $f_k(x)= \int_0^x f_k',\ x \in [0,1],$ by  \eqref{bis}, we obtain  $ f_k \rightrightarrows f$, where
 $f(x):= \int_0^x  \vf,\ x \in [0,1].$ 
 Clearly  $f \in C^1[0,1]$ and $ f'_k \rightrightarrows f'=\vf$.

By   \eqref{pktvar} and   \eqref{pkvl}, $P= \bigcap_{n=1}^{\infty} P_n$ is a nonempty closed set and  \eqref{pkvl}
 with  \eqref{pkvl2} easily imply that $P$ is perfect and nowhere dense. 

By \eqref{pkvl} we easily obtain that the countable set $D \subset P$ is dense in $P$.

Now consider an arbitrary $d\in D$. Then there exists $k \in \N$ such that $d \in R_k^*$ and so we have defined
 $u_d, v_d \in (0,1)$ such that $u_d < v_d <d$ and \eqref{uvpr} and \eqref{uvdr} hold.
 Now consider an arbitrary point  $(x,y) \in B((d,f_k(d)),\eta_{k+1})$. By  \eqref{uvdr} we can choose, for each
 $l>k$, a point $p_l \in [u_d,v_d] \cap P_l$ such that  $(x,y) \in T_{f_l,p_l}$. Choose a convergent subsequence
 $(p_{l_i})_{i=1}^{\infty}$ of $(p_l)_{l=k+1}^{\infty}$ with  $p_{l_i} \to p$. Then $p \in P \cap [u_d,v_d]$
 and Lemma \ref{stkon} gives $(x,y) \in T_{f,p}$. Thus we have proved that
\begin{equation}\label{hvbt}
  B((d,f_k(d)),\eta_{k+1}) \subset T_{f, P \cap [u_d,v_d]}.
	\end{equation}
	Using \eqref{fkvl} and \eqref{eta}, we obtain
	\begin{multline*}
	|f(d) - f_k(d)| \leq \|f_{k+1} - f_k\| + \|f_{k+2} - f_{k+1}\| + \dots\\
	< \eta_{k+1}/2 + \eta_{k+2}/2 + \eta_{k+3}/2+\dots \leq (1/2) (\eta_{k+1} + \eta_{k+1}/2  + \eta_{k+1}/4+ \dots)  = \eta_{k+1}.
	\end{multline*}
	So \eqref{hvbt} gives 
	$$ (d,f(d)) \in  B((d,f_k(d)),\eta_{k+1}) \subset \interior T_{f, P \cap [u_d,v_d]}$$
	 and \eqref{uvnitr} is proved.
	
	Finally, condition \eqref{Dep} holds since  each $R_k^*$ is finite and $d-u_d < 1/k$ for $d \in R_k^*$
	 by \eqref{pkvl} and  \eqref{uvpr}.

	So the assertion of our theorem holds for $M:= \graph f|_P$ by  Lemma \ref{DP}.
\end{proof}

\section{Slobodnik's necessary condition is not sufficient}\label{SNCINS}

As an easy consequence of Theorem \ref{hlavni}, we will prove the following result which shows that Slobodnik's necessary condition
 (for sets from $\cal D_l^n$) is not suficient.

\begin{proposition}\label{nepost}
There exists  a set $M\subset \R^2$ such that $M = \bigcup_{k=1}^{\infty} B_k$, where $B_k$
 have properties (i), (ii) and (iii) from Theorem \ref{Slob} for $n=2$, but $M \notin \cal D_l^2$.
\end{proposition}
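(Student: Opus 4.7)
The plan is to take the pair $(f,P)$ from Theorem \ref{hlavni} and set $M := \graph(f|_P)$; since $M \notin \cal D_l^2$ by Theorem \ref{hlavni}, it remains to produce a countable decomposition $M = \bigcup_{k=1}^{\infty} B_k$ in which each $B_k$ satisfies Slobodnik's conditions (i)--(iii) of Theorem \ref{Slob}. I would first arrange that $f'(x) > 0$ on $[0,1]$, hence $f$ strictly increasing, by a minor modification of the construction in Theorem \ref{hlavni}: replace the nowhere differentiable seed $g$ by $g + c_0$ for a sufficiently large constant $c_0$. The quantitative bound $\|f_1'\| \le 1/2$ in \eqref{bis} is only used to get a uniform bound on $\|f'\|$, which enters as the constant $K$ in Lemma \ref{DP}; it can freely be replaced by any larger constant. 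Under this strengthening, $f$ extends Lipschitz-ly to a function $\bar f$ on $\R$, and every $B_k \subseteq M$ is then a compact subset of the single Lipschitz hypersurface $L_k := \graph \bar f$, giving (i).

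For (ii) and (iii) I would use a decomposition based on a fine partition of $[0,1]$. For each $k$, by uniform continuity of $f'$, partition $[0,1]$ into finitely many closed subintervals $J_{k,1}, \dots, J_{k,n_k}$ of length at most $1/k$ on each of which $\osc(f',J_{k,j}) < 1/k$, and set $B_{k,j} := \graph(f|_{P \cap J_{k,j}})$; enumerating all $(k,j)$ gives a countable family with union $M$. The orthogonal projection of $B_{k,j}$ onto the line of direction $(\cos\theta, \sin\theta)$ is $g_\theta(P \cap J_{k,j})$ for the $C^1$ map $g_\theta(x) := x\cos\theta + f(x)\sin\theta$, while the central projection from $c \notin B_{k,j}$ is the image of $P \cap J_{k,j}$ under the $C^1$ parametrisation $\phi_c(x) := ((x,f(x)) - c)/\|(x,f(x)) - c\|$. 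Because $\osc(g_\theta', J_{k,j}) \le |\sin\theta| \cdot \osc(f', J_{k,j}) < 1/k$, for every $\theta$ outside a narrow exceptional arc (whose width shrinks with $k$) the derivative $g_\theta'$ keeps a constant sign on $J_{k,j}$, so $g_\theta|_{J_{k,j}}$ is strictly monotone and $g_\theta(P \cap J_{k,j})$ is nowhere dense as a homeomorphic image of the nowhere dense set $P \cap J_{k,j}$. Condition (iii) is treated by an analogous argument for $\phi_c$.

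The main obstacle will be the remaining narrow exceptional range of directions $\theta$ (respectively, of centres $c$) for which $g_\theta'$ (respectively, the derivative of $\phi_c$) vanishes somewhere on $J_{k,j}$. The plan is to combine Sard's theorem --- so that $g_\theta(\{g_\theta' = 0\} \cap J_{k,j})$ is Lebesgue-null --- with a monotonicity decomposition of the open set $\{g_\theta' \ne 0\} \cap J_{k,j}$ into countably many open subintervals on each of which $g_\theta$ is strictly monotone and sends its $P$-piece to a nowhere dense image. This expresses $g_\theta(P \cap J_{k,j})$ as a union of a first-category set with a Lebesgue-null set; the delicate technical step --- and the technical heart of the proof --- is to upgrade this description to outright nowhere-denseness of the closed set $g_\theta(P \cap J_{k,j})$, by exploiting the smallness of $\diam J_{k,j}$ and of $\osc(f', J_{k,j})$.
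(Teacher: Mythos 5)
Your choice of $M := \graph(f|_P)$ and the reduction of (ii) and (iii) to the statement ``the image of a compact nowhere dense set under a $C^1$ map of one real variable is nowhere dense'' is exactly the route the paper takes; but the paper simply quotes that statement as \cite[Lemma 4.1]{CG1}, uses the trivial decomposition $B_k:=M$ for every $k$, and is done. Two of your preparatory moves are superfluous. First, the graph of \emph{any} $C^1$ (hence Lipschitz) $f$ on $[0,1]$ extends to the graph of a Lipschitz function on $\R$, which is a Lipschitz hypersurface in the sense of Definition \ref{liphyp}, so forcing $f'>0$ is not needed for (i). Second, the fine partition into intervals $J_{k,j}$ with $\osc(f',J_{k,j})<1/k$ buys nothing for (ii): condition (ii) of Theorem \ref{Slob} must hold for \emph{every} direction, and for each fixed piece $J_{k,j}$ there remains an exceptional arc of directions $\theta$ for which $g_\theta'$ vanishes somewhere in $J_{k,j}$; for those $\theta$ the difficulty is exactly the same as for the unpartitioned set, no matter how small $\diam J_{k,j}$ and $\osc(f',J_{k,j})$ are.

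The genuine gap is the step you yourself flag as the ``technical heart'': upgrading ``$g_\theta(P\cap J_{k,j})$ is (first category) $\cup$ (Lebesgue null)'' to nowhere dense. In that generality no upgrade is possible ($\R$ itself is the union of a null set and a first category set), and smallness of the oscillation is not the missing ingredient. What is missing is that your two pieces carry extra structure: the Sard part $N:=g_\theta\bigl(\{g_\theta'=0\}\cap P\cap J_{k,j}\bigr)$ is \emph{compact} and null, while the monotone part is an $F_\sigma$ first category set $F$, namely a countable union of compact nowhere dense sets $g_\theta\bigl(P\cap[a+1/n,\,b-1/n]\bigr)$ taken over the components $(a,b)$ of $\{g_\theta'\neq 0\}$. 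If the compact set $g_\theta(P\cap J_{k,j})\subset F\cup N$ contained a nondegenerate interval $I$, then $I\setminus N$ would be a nonempty open subset of $I$ covered by the meager set $F$, contradicting the Baire category theorem. With this observation (which is essentially the proof of \cite[Lemma 4.1]{CG1}) your argument closes, and the partitions become unnecessary. For (iii) the same care is needed, plus one extra point you omit: $\phi_c$ may be undefined at a single point $c_1\in[0,1]\setminus P$, so the domain must be split into two compact pieces avoiding $c_1$, as the paper does.
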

\begin{proof}
Let $M$ and $f$ be as in Theorem \ref{hlavni}. So we have  $M \notin \cal D_l^2$ and we have $M = \{(x,f(x)):\ x\in P\}$ where $P\subset [0,1]$ is nowhere dense. 
 Set  $B_k:= M,\ k\in \N$; we will show that then properties (i), (ii), (iii) hold. 

The property (i) is almost obvious, since  we can extend $f$ to a Lipschitz function
 $f^*$ on $\R$ and thus $M$ is a compact subset of $\graph f^*$ which is a Lipschitz hypersurface in $\R^2$.

To prove (ii), consider a hyperplane $H$ in $\R^2$. Then $H$ is a line and we can suppose that it contains
 the origin. Then the projection onto $H$ is a linear mapping and so it is clearly sufficient to prove that,
 for each  $a,b \in \R$, 
 $Z:= \{ax+ bf(x):\ x \in P\}$     is nowhere dense in $\R$.  Since  $Z= g(P)$, where $g(x):= ax+f(x),\ x\in [0,1],$ is $C^1$-smooth 
 and $P$ is nowhere dense, we obtain (see \cite[Lemma 4.1]{CG1}) that $Z= g(P)$ is nowhere dense.

To prove (iii), consider an arbitrary  point $c=(c_1,c_2) \in \R^2 \setminus M$. To prove that the set
\begin{multline*}  E:= \left\{\frac{(x,y)-c}{\|(x,y)-c\|}:\  (x,y)\in M\right\} \\= \left\{\left(\frac{x-c_1}{\sqrt{(x-c_1)^2 + (f(x)-c_2)^2}},
 \frac{f(x)-c_2}{\sqrt{(x-c_1)^2 + (f(x)-c_2)^2}}\right),\ x\in P\right\}
\end{multline*}
is nowhere dense in $S_{\R^2}$, it is clearly sufficient to show that (its ``projection'')
$$ F:= \left\{\frac{x-c_1}{\sqrt{(x-c_1)^2 + (f(x)-c_2)^2}},\ x\in P\right\}$$
is nowhere dense in $\R$. 
Set  
$$h(x):= \frac{x-c_1}{\sqrt{(x-c_1)^2 + (f(x)-c_2)^2}}\ \ \ \text{  if}\ \ \   (x,f(x)) \neq c.$$
If $c_1 \in P$ or $c_1 \notin [0,1]$, then  $h$ is clearly $C^1$-smooth on $[0,1]$ and therefore
 $F= h(P)$ is nowhere dense by  \cite[Lemma 4.1]{CG1}. 

If  $c_1 \in [0,1] \setminus P$, then $h$ can be undefined for $x=c_1$. However, then there exist
 points   $0<t_1 < t_2 < 1$ such that  $F= h(P \cap [0,t_1]) \cup   h(P \cap [t_2,1])$ and
 $h$ is $C^1$-smooth on  $[0,t_1]$ (resp. $[t_2,1]$) whenever  $P \cap [0,t_1] \neq \emptyset$
 (resp.  $P \cap [t_2,1] \neq \emptyset$). So \cite[Lemma 4.1]{CG1} gives again that $F$ is nowhere dense.
\end{proof}
\begin{remark}\label{iiih}
In the above proposition, we can state that sets $B_k$ satisfy the following condition (more general than  (iii)):
\smallskip

 (iii)* \ \ \ {\it For each $c \in \R^2$, the set $\{\frac{x-c}{\|x-c\|}:\ x \in B_k\setminus \{c\}\}$ is nowhere dense in
 the unite sphere $S_{\R^2}$.}
\smallskip

The proof is only a slight refinement of the proof of (iii).

\end{remark}

\section{An analogon of Slobodnik's necessary condition in separabůe Banach spaces}\label{SNC}
In this section we will prove the following analogon of Slobodnik's  Theorem \ref{Slob}.
\begin{proposition}\label{anslo} 
Let $X$ be a separable Banach space ($\dim X \geq 2$) and let $f: X\to \R$ be a linearly continuous function having the Baire property. Then  the set $D(f)$
	of all discontinuity points of $f$  can be written as  $D(f)= \bigcup_{k=1}^{\infty} B_k$
	 where each $B_k$ has the following properties.
	\begin{enumerate}
\item  $B_k$ is a bounded closed subset of a  Lipschitz hypersurface $L_k \subset X$ which is nowhere dense in
 $L_k$.
\item Any linear projection of $B_k$ onto any hyperplane 
	   $0\in H \subset X$ is a first category subset  of $H$.
\item  For each $c \in  X \setminus B_k$, the set $\{\frac{x-c}{\|x-c\|}:\ x \in B_k\}$ is a first category set in  $S_X$.
\end{enumerate}
\end{proposition}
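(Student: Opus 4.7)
The plan is to reduce the proposition to a straightforward combination of \cite[Corollary 4.2]{Za} and the Banakh--Maslyuchenko characterization (Theorem \ref{banmas}).

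First I would apply Theorem \ref{banmas}: since $f$ is linearly continuous with the Baire property, $D(f)$ is $\overline{\sigma}$-$l$-miserable, i.e.\ $D(f)=\bigcup_{n=1}^\infty C_n$ with each $C_n$ closed and $l$-miserable. Next, since $X$ is separable, I would cover $X$ by countably many closed balls $\overline{B(x_m,1)}$ and intersect each $C_n$ with them. The resulting pieces $B_k:=C_n\cap\overline{B(x_m,1)}$ are closed and bounded, and they remain $l$-miserable: a closed subset of a closed $l$-miserable set $C\subset\overline{X\setminus L}$ is itself $l$-miserable with the same closed $l$-neighborhood $L$, since the defining one-sided-segment condition on $L$ depends only on the points of the subset and $L$ automatically contains them. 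Relabeling, $D(f)=\bigcup_{k=1}^\infty B_k$ with each $B_k$ closed, bounded and $l$-miserable.

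The three properties (i), (ii), (iii) are then obtained from \cite[Corollary 4.2]{Za} applied to each $B_k$. That corollary already furnishes a Lipschitz hypersurface $L_k\subset X$ containing $B_k$, which after a routine countable refinement (splitting $B_k$ according to a countable base for the relative topology of $L_k$ on which it has empty interior) yields (i). For (ii), given a continuous linear projection $\pi\colon X\to H$ with $v\in\ker\pi\setminus\{0\}$, the $l$-miserable condition supplies a closed $l$-neighborhood $L$ of $B_k$ with $B_k\subset\overline{X\setminus L}$ and $a+[0,\varepsilon_a)v\subset L$ for every $a\in B_k$. I would partition $B_k$ according to a lower bound on $\varepsilon_a$, and for each piece use the above one-sided condition together with the Lipschitz graph structure from $L_k$ to show that $\pi$ of the piece is nowhere dense in $H$; the countable union gives a first-category subset of $H$. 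Property (iii) is obtained by applying the same scheme to the radial map $x\mapsto(x-c)/\|x-c\|$ on $X\setminus\{c\}$, using the $l$-miserable condition along rays emanating from $c\notin B_k$.

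The main obstacle, and the reason this is phrased as a ``first category'' statement rather than a ``nowhere dense'' one as in Theorem \ref{Slob}, is the absence of local compactness in infinite-dimensional $X$: one cannot directly transfer the nowhere-denseness of projections from Slobodnik's argument because bounded closed sets are no longer compact and continuous images need not preserve nowhere-denseness. I would handle this by keeping all arguments at the level of Baire category and by exploiting the Lipschitz hypersurface supplied by \cite[Corollary 4.2]{Za} as the key geometric ingredient that, together with the $l$-miserable line-by-line sparseness, allows the required category transfer under both the linear projection $\pi$ and the radial map from $c$.
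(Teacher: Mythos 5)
Your overall architecture matches the paper's: decompose $D(f)$ via Theorem \ref{banmas} into closed $l$-miserable sets, intersect with bounded sets and with the countably many Lipschitz hypersurfaces from \cite[Corollary 4.2]{Za}, and verify (i)--(iii) for the resulting closed, bounded, $l$-miserable pieces (your observation that closed subsets inherit $l$-miserability is correct and is used in the paper too). The gap is in the step that carries all the content, namely the proof that the projection $\pi_Y$ along $V=\lin\{v\}=\ker\pi$ of an $l$-miserable set $A$ is first category. Your plan is to partition $A$ into $A_n:=\{a\in A:\ a+[-1/n,1/n]\,v\subset L\}$ and argue that each $\pi_Y(A_n)$ is nowhere dense. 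That single partition is not enough: if $\pi_Y(A_n)$ were dense in some open $G\subset Y$, you would only know that over a dense set of $y\in G$ there is \emph{some} segment of length $2/n$ in the fibre $\pi_Y^{-1}(y)$ lying in $L$; these segments can sit at wildly different $V$-coordinates, so their closure need not contain any open set, and no contradiction with $A\subset\overline{X\setminus L}$ results. The paper's Lemma \ref{proj} fixes this with a \emph{second} countable partition according to the $V$-coordinate, $A_{n,k}:=A_n\cap(\pi_V)^{-1}(V_{n,k})$ with $\diam V_{n,k}<1/n$; then the segments over a given slice overlap coherently, the closedness of $L$ forces the whole open slab $(\pi_Y)^{-1}(G)\cap(\pi_V)^{-1}(V_{n,k})$ into $L$, and this contradicts $a\in A\subset\overline{X\setminus L}$ for a point $a\in A_{n,k}$ with $\pi_Y(a)\in G$. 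Your appeal to ``the Lipschitz graph structure from $L_k$'' cannot substitute for this: $L_k$ is a graph over a complement of some direction that has nothing to do with $\ker\pi$ (the projection in (ii) is arbitrary), and a line in direction $v$ can meet $L_k$ in a large set. The same issue, with $\|x-c\|$ playing the role of the $V$-coordinate, affects your sketch of (iii).

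A second, smaller gap is in (i). A ``routine countable refinement by a countable base'' cannot make $B_k$ nowhere dense in $L_k$: if $B_k$ had nonempty interior in the Baire space $L_k$, no decomposition into closed nowhere dense pieces would exist, so the nowhere-density has to be \emph{proved}. The paper derives it from (ii): $\pi_Y|_{L_k}\colon L_k\to Y$ is a homeomorphism, so $\pi_Y(B_k)$ is a closed, first category subset of the complete space $Y$, hence nowhere dense in $Y$, hence $B_k$ is nowhere dense in $L_k$. (Note also that \cite[Corollary 4.2]{Za} provides countably many hypersurfaces covering $D(f)$, not a single one containing a given piece; this is why the paper forms the pieces $F_m\cap M_n\cap\{x:\|x\|\le p\}$ before relabelling.)
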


This result improves \cite[Corollary 4.2]{Za} which only asserts that $D(f)$ can be 
	covered by countably many Lipschitz hypersurfaces (and implies that $D(f)$  is a null subset of $X$
	 in any usual sense). 
	
	We infer Proposition \ref{anslo} easily from \cite[Corollary 4.2]{Za}, 
	the Banakh-Maslyuchenko characterization (Theorem \ref{banmas}) and simple Lemma \ref{proj} below.
	
	Although our Proposition \ref{anslo} is not a direct generalization of Slobodnik's Theorem \ref{Slob},
	  {\it  it easily implies Slobodnik's  result}.   Indeed, if $X= \R^n$, then each linearly continuous $f$ has the Baire property by 
	 \eqref{leb} and  all $B_k$ and their projections (in (ii) and (iii)) are compact. So  it is sufficient to use the fact that  each closed  first category subset of a complete metric space is nowhere dense.

\begin{lemma}\label{proj}
Let $X$ be a separable 
Banach space with $\dim X \geq 2$, $v \in S_X$ and let $Y$ be a topological complement of $V:= \lin\{v\}$.  
 Let $A \subset X$ be an $l$-miserable set. Then 
\begin{enumerate}
\item
 the projection  $\pi_Y(A)$ of $A$ onto $Y$
 in the direction of $V$ is of the first category in $Y$, and
\item
 for each $c \in X \setminus A$, the set $\{\frac{x-c}{\|x-c\|}:\ x \in A\}$ is of the first category in $S_X$.
\end{enumerate}
\end{lemma}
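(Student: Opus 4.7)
The plan is to prove (i) and (ii) by the same Baire/selection argument; I spell out (i) and then indicate the modifications for (ii). Fix a closed $l$-neighborhood $L$ of $A$ with $A\subset\overline{X\setminus L}$, and let $v^*\in X^*$ be the continuous functional with $v^*(v)=1$ and $v^*|_Y=0$, so each $x\in X$ writes uniquely as $x=\pi_Y(x)+v^*(x)v$. Applying the $l$-neighborhood property of $L$ in the directions $\pm v$ and using that $L$ is closed, for every $a\in A$ one gets $\ep>0$ with $a+[-\ep,\ep]v\subset L$; hence the closed sets
\[
W_n:=\{x\in L:\ x+[-1/n,1/n]v\subset L\},\qquad n\in\N,
\]
cover $A$. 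Set $A_{n,m}:=W_n\cap\overline{X\setminus L}\cap\{x:|v^*(x)|\le m\}$; these are closed, cover $A$, and lie in a $v^*$-bounded strip. A lifting argument (a convergent sequence of projections has $v^*$-coordinates in $[-m,m]$ and so admits a convergent subsequence in $A_{n,m}$) then shows that $\pi_Y(A_{n,m})$ is closed in $Y$.

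It suffices to show each $\pi_Y(A_{n,m})$ has empty interior, for then $\pi_Y(A)\subset\bigcup_{n,m}\pi_Y(A_{n,m})$ is meager. Suppose to the contrary that a nonempty open ball $B_Y\subset Y$ satisfies $B_Y\subset\pi_Y(A_{n,m})$, and define the multifunction $T(y):=\{t\in[-m,m]:y+tv\in A_{n,m}\}$ on $B_Y$. Closedness of $A_{n,m}$ together with boundedness of the $v$-coordinates makes $T$ upper semicontinuous with nonempty compact values, so the selection $t^*(y):=\min T(y)$ is lower semicontinuous on the Baire space $B_Y$; hence $t^*$ is continuous at every point of some dense $G_\delta$ set $D\subset B_Y$.

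Fix $y_0\in D$. Continuity of $t^*$ at $y_0$ supplies $\delta_0>0$ with $|t^*(y)-t^*(y_0)|<1/(2n)$ for every $y\in B(y_0,\delta_0)\cap D$; since for each such $y$ the point $y+t^*(y)v$ lies in $A_{n,m}\subset W_n$, we obtain
\[
y+[t^*(y_0)-1/(2n),t^*(y_0)+1/(2n)]v\subset y+[t^*(y)-1/n,t^*(y)+1/n]v\subset L.
\]
Density of $D$ in $B(y_0,\delta_0)$ and closedness of $L$ propagate this inclusion to every $y\in B(y_0,\delta_0)$, so the open set $B(y_0,\delta_0)+(t^*(y_0)-1/(2n),t^*(y_0)+1/(2n))v$ is a neighborhood of $y_0+t^*(y_0)v$ contained in $L$. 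Thus $y_0+t^*(y_0)v\in\interior L$, contradicting $y_0+t^*(y_0)v\in A_{n,m}\subset\overline{X\setminus L}=X\setminus\interior L$.

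For (ii) the same argument applies after reparametrising $X\setminus\{c\}$ as $(0,\infty)\times S_X$ via the homeomorphism $(r,u)\mapsto c+ru$: replace $W_n$ by
\[
\{x\in L:\|x-c\|\ge 2/n\text{ and }c+[\|x-c\|-1/n,\|x-c\|+1/n]\tfrac{x-c}{\|x-c\|}\subset L\},
\]
and restrict to $\|x-c\|\le m$, obtaining closed sets whose image under $\rho(x):=(x-c)/\|x-c\|$ is closed in the Baire space $S_X$. The analogous l.s.c.-selection argument on an open subset of $S_X$ yields a direction $u_0$ with a continuous selection $t^*$ at $u_0$, and since $(r,u)\mapsto c+ru$ is an open map, the same ``box''-inside-$L$ conclusion places $c+t^*(u_0)u_0$ in $\interior L$, contradicting its membership in $\overline{X\setminus L}$. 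The step I expect to need the most care is the passage from upper semicontinuity of $T$ to a lower semicontinuous selection continuous on a dense $G_\delta$; the remainder reduces to routine closure and convergence manipulations.
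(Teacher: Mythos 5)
Your argument is correct, and while it shares the paper's overall skeleton --- fix a closed $l$-neighbourhood $L$ with $A\subset\overline{X\setminus L}$, stratify $A$ by the depth $1/n$ of the segment $a+[-1/n,1/n]v$ contained in $L$, and reach a contradiction by exhibiting an open ``tube'' inside $L$ that contains a point of $A\subset\overline{X\setminus L}$ --- the device you use to pin down the fibre coordinate is genuinely different. The paper refines the decomposition \emph{a priori}: it covers $V$ by open sets $V_{n,k}$ of diameter $<1/n$ and sets $A_{n,k}:=A_n\cap(\pi_V)^{-1}(V_{n,k})$, so that for every $x\in A_{n,k}$ the entire window $(\pi_Y)^{-1}(\pi_Y(x))\cap(\pi_V)^{-1}(V_{n,k})$ lies in $L$; if $\pi_Y(A_{n,k})$ were dense in some open $G\subset Y$, closedness of $L$ would immediately force the open box $(\pi_Y)^{-1}(G)\cap(\pi_V)^{-1}(V_{n,k})$ into $L$, with no need to know that the projections are closed and no selection machinery. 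You instead keep the fibre coordinate merely bounded ($|v^*(x)|\le m$), prove closedness of $\pi_Y(A_{n,m})$ by a compactness lifting, and localize \emph{a posteriori} through the minimal selection $t^*$ of the fibre multifunction together with generic continuity of a lower semicontinuous function on a Baire space. The step you flag as delicate is in fact standard: a multifunction with closed graph and values in a fixed compact interval is upper semicontinuous, its pointwise minimum is lower semicontinuous, and a semicontinuous real function on a Baire space is continuous on a dense $G_\delta$. Both routes work; the paper's is more elementary, while yours yields the marginally stronger conclusion that the projection (resp.\ the radial image in (ii)) is a countable union of \emph{closed} nowhere dense sets. Your indicated adaptations for (ii) --- radial segments, radii confined to $[2/n,m]$, and openness of $(r,u)\mapsto c+ru$ --- are exactly the ones needed and parallel the paper's own part (ii).
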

\begin{proof}
Choose a closed $l$-neighbourhood $L$ of $A$ such that $A \subset \overline{X \setminus L}$. The proofs of (i) and (ii) are quite analogous.
\smallskip

(i)\ \ For each $n\in \N$, set  $A_n:= \{x\in A:\ \{x+tv:\ t \in [-1/n, 1/n]\} \subset L\}$. Since
$L$ is an $l$-neighbourhood  of $A$, we have $A= \bigcup_{n \in \N} A_n$. 
 Now choose, for each $n \in \N$, a covering of $V$ by open sets  $V_{n,1}, V_{n,2},...$ such that
 each $V_{n,k} \subset V$ is  an open subset of $V$ with  $\diam(V_{n,k}) < 1/n$. 
 Denote by $\pi_V$ the projection of $X$ onto $V$ in the direction of $Y$ and set
 $A_{n,k}:= A_n \cap (\pi_V)^{-1}(V_{n,k}),\ n,k \in \N$. 
 Using the definitions of  sets $A_n$ and $V_{n,k}$,
it is easy to see that  
\begin{equation}\label{jevl}
(\pi_Y)^{-1}(\pi_Y(x)) \cap (\pi_V)^{-1}(V_{n,k})\subset L \ \ \text{whenever}\ \  n,k\in \N,\ x \in A_{n,k}.
\end{equation}
Since   $A = \bigcup_{n,k \in \N} A_{n,k}$, it is sufficient to prove that each set
 $\pi_Y(A_{n,k})$ is nowhere dense in the space $Y$. So suppose, to the contrary, that
 there exist  $n$, $k$ and a nonempty set $G\subset Y$ which is open in $Y$ and $\pi_Y(A_{n,k})\cap G$
 is dense in $G$. Using  \eqref{jevl} and the closedness of $L$, we obtain
 $ (\pi_Y)^{-1}(G) \cap (\pi_V)^{-1}(V_{n,k}) \subset L$. Since  $ (\pi_Y)^{-1}(G) \cap (\pi_V)^{-1}(V_{n,k})$
 is open
 and contains a point $ a\in A_{n,k} \subset A$, we obtain  $a \notin \overline{X \setminus L}$ which contradicts
 to  $A \subset \overline{X \setminus L}$.
\smallskip

(ii)\ \ Fix a point $c\in X\setminus A$ and define the ``projection''  $\pi_S: X\setminus \{c\} \to S_X$
 by  $\pi_S(x):= \frac{x-c}{\|x-c\|}$, $x \in  X\setminus \{c\}$.
 For each $n\in \N$, denote  $A_n:= \{x\in A:\ \{x+t \pi_S(x) :\ t \in [-1/n, 1/n]\} \subset L \}$. Since
$L$ is an $l$-neighbourhood  of $A$, we have $A= \bigcup_{n \in \N} A_n$. 
 Now choose, for each $n \in \N$, a covering of $(0,\infty)$ by open subsets  $H_{n,1}, H_{n,2},...$  with  $\diam(H_{n,k}) < 1/n$.
Set
 $A_{n,k}:= A_n \cap \{x\in X: \|x-c\|\in H_{n,k}\},\ n,k \in \N$. 
 Using the definitions of  sets $A_n$ and $H_{n,k}$,
it is easy to see that  
\begin{equation}\label{jevl2}
(\pi_S)^{-1}(\pi_S(x)) \cap \{x\in X: \|x-c\|\in H_{n,k}\}\subset L \ \ \text{whenever}\ \  n,k\in \N,\ x \in A_{n,k}.
\end{equation}
Since   $A = \bigcup_{n,k \in \N} A_{n,k}$, it is sufficient to prove that each set
 $\pi_S(A_{n,k})$ is nowhere dense in the sphere $S$. So suppose, to the contrary, that
 there exist  $n$, $k$ and a nonempty set $G\subset S$ which is open in $S$ and $\pi_S(A_{n,k})\cap G$
 is dense in $G$. Using  \eqref{jevl2} and the closedness of $L$, we obtain
 $ (\pi_S)^{-1}(G) \cap \{x\in X: \|x-c\|\in H_{n,k}\} \subset L$. Since $ (\pi_S)^{-1}(G) \cap \{x\in X: \|x-c\|\in H_{n,k}\}$
 is open
 and contains a point $ a\in A_{n,k} \subset A$, we obtain  $a \notin \overline{X \setminus L}$ which contradicts
 to  $A \subset \overline{X \setminus L}$.
\end{proof}

{\bf Proof of Proposition \ref{anslo}.}

By Theorem \ref{banmas} there exist closed $l$-miserable sets $F_m \subset X$, $m \in \N$, such that
 $D(f)= \bigcup_{m\in \N} F_m$.  By  \cite[Corollary 4.2]{Za} there exist Lipschitz hypersurfaces
 $M_n$, $n \in \N$, such that $D(f) \subset \bigcup_{n\in \N} M_n$.  Set
$$  Z_{m,n,p}: = F_m \cap M_n \cap \{x\in X:\ \|x\| \leq p\},\ \ \ 
m,n,p \in \N, $$
 and let $(B_k)_{k=1}^{\infty}$ be a sequence of all elements of the set  $\{Z_{m,n,p}:\ m,n,p \in \N\}$.
 Now consider an arbitrary $k \in \N$. Obviously,  $B_k$ is closed and bounded. It is also $l$-miserable
 since it is a subset of an $l$-miserable set $F_m$ and so we obtain by Lemma \ref{proj} that 
conditions (ii) and (iii) hold.
Further $B_k$ is
contained in
 some Lipschitz hypersurface $M_n$. Let  $V$ and $Y$ be linear spaces corresponding to
 $M_n$ as in Definition \ref{liphyp} and denote by $\pi_Y$ the projection onto $Y$ in the direction of $V$.
Since $\pi_Y|_{M_n}: M_n \to Y$ is clearly a homeomorphism, we obtain that $\pi_Y(B_k)$ is a closed
 subset of the complete space $Y$ and so it is nowhere dense in $Y$, because it is a first category set in $Y$
 by condition (ii). Consequently  $B_k$ is nowhere dense in $M_n$ and thus condition (i) holds (with $L_k:= M_n$).

\end{document}